\newtheorem{theorem}{Theorem}[]
\newtheorem{lemma}[theorem]{Lemma}
\titleformat{\section}[display]{\normalfont\huge\bfseries\centering}{\centering\chaptertitlename\thechapter}{10pt}{\Large}
\titlespacing*{\section}{0pt}{0ex}{0ex}
\begin{document}
\title{Apollonius Problem and Caustics of an Ellipsoid} 
\author[Y.N. Aliyev]{Yagub N. Aliyev}
\date{\today}
\address{ADA University, Ahmadbey Aghaoglu str. 61, Baku, 1008, Azerbaijan}
\email{yaliyev@ada.edu.az}
\maketitle

\let\thefootnote\relax
\footnotetext{MSC2020: Primary 53A05, Secondary: 53A04.

Keywords: ellipsoid; Apollonius; caustics; centro-surface; astroida; ellipse.}
\begin{abstract}
In the paper we discuss Apollonius Problem on the number of normals of an ellipse passing through a given point. It is known that the number is dependent on the position of the given point with respect to a certain astroida. The intersection points of the astroida and the ellipse are used to study the case when the given point is on the ellipse. The problem is then generalized for 3-dimensional space, namely for Ellipsoids. The number of concurrent normals in this case is known to be dependent on the position of the given point with respect to caustics of the ellipsoid. If the given point is on the ellipsoid then the number of normals is dependent on position of the point with respect to the intersections of the ellipsoid with its caustics. The main motivation of this paper is to find parametrizations and classify all possible cases of these intersections.
\end{abstract} 

\bigskip

$\,$

$\,$
\textbf{1. Introduction}

How many normals can one draw from a point to an ellipse? In the current paper we will try to solve this problem and its generalization to 3 dimensions, using the methods of differential and integral calculus and differential geometry, which were not around when Apollonius of Perga (c. III-II centuries BC) first asked and answered this question in his famous work \textit{Conics} \cite{apol}. Their number is not the only interesting question about these normals.
For example, theorem proved by Joachimstal in 1843 states that if $AB_1$, $AB_2$, $AB_3$, and $AB_4$ are these normals, then the points $B_1,\ B_2,\ B_3$, and the point diametrically opposite to $B_4$, with respect to the center $O$, of the ellipse, are concyclic \cite{joach2} (see also Sect. 17.2 in \cite{berger2},  \cite{hamf}, \cite{cayley2}, \cite{desbov}, \cite{legendre}). There are more results related to this fact in \cite{berger}, Sect. 17.7.3. 

\begin{figure}[htbp]
\centerline{\includegraphics[scale=.2]{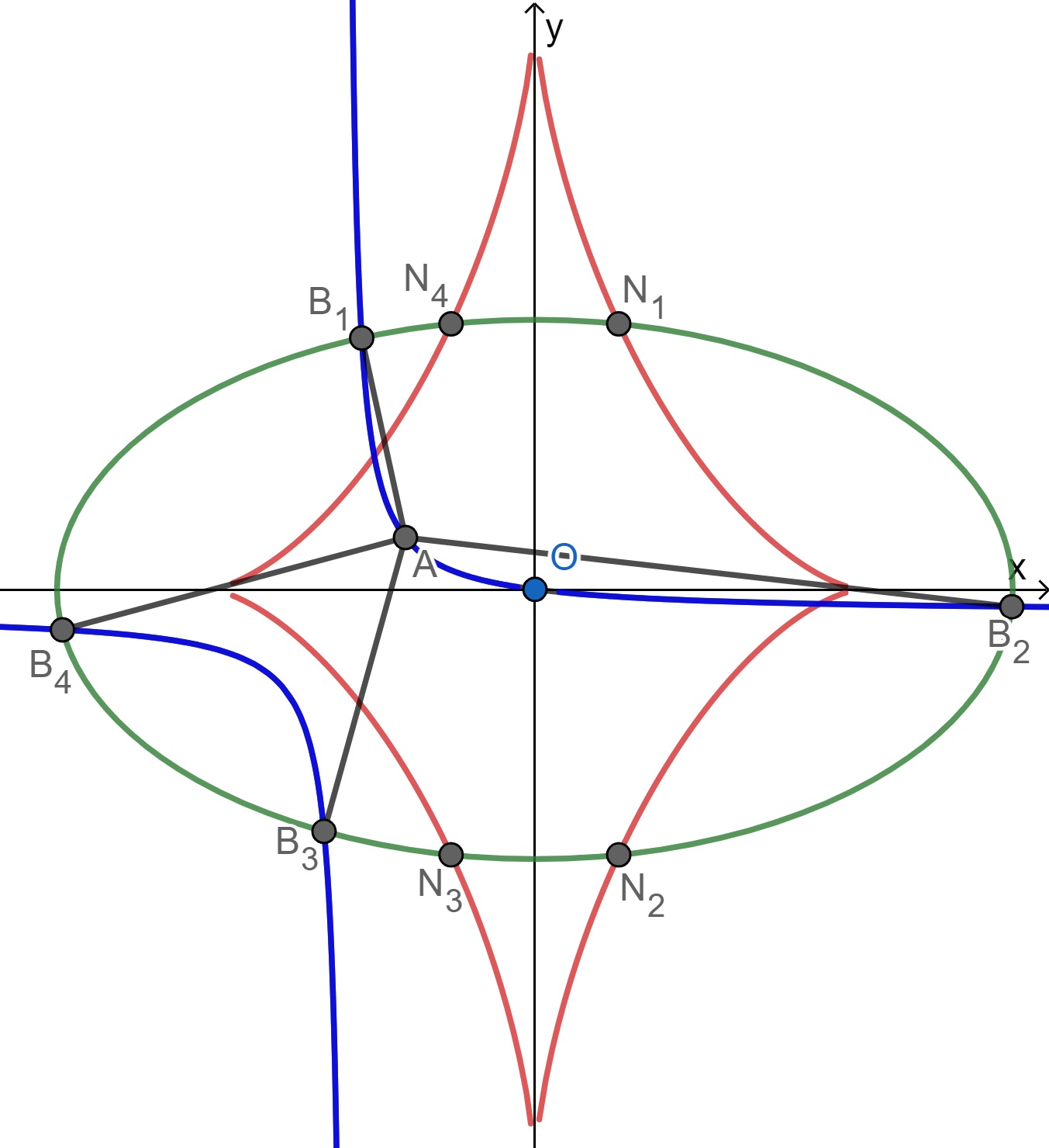}}
\label{fig1}
Figure 1: Ellipse $\frac{x^2}{a^2}+\frac{y^2}{b^2}=1$ (green), its 4 normals (black), Apollonius hyperbola $y=\frac{xY}{\epsilon X-(\epsilon -1)x}$ (blue), and astroida $\sqrt[3]{a^2X^2}+\sqrt[3]{b^2Y^2}=\sqrt[3]{\left(a^2-b^2\right)^2}$ (red). Created using GeoGebra.
\end{figure}

The problem about the number of normals, which Apollonius called as \textit{the shortest} and sometimes \textit{the longest line segments}, appeared in the fifth book of Apollonius, which survived only in Arabic translation \cite{milne}. For the outline of the solution of Apollonius, one can check \cite{van}, Chapter VII, p. 260-261. There is a lively discussion of this problem also in pages 131-135 of \cite{tikh}, \cite{hart}. The problem was also mentioned by V.I. Arnold in his paper \cite{arnold}, Chapter IV and related popular lecture \cite{arnold2} available online both as a brochure and as a YouTube video. The main objective of this paper is to study in detail the cases when the point $A$ is on the ellipse (1), and generalize these results to three dimensions for ellipsoids. Some of the results in the current paper were presented at the Maple Conference 2022 \cite{aliyev}, again available as a YouTube video.

\textbf{2. Apollonius problem for plane}

Let the ellipse be defined by
$$\frac{x^2}{a^2}+\frac{y^2}{b^2}=1, \eqno(1)
$$
where we assume that $a>b>0$. Let us take an arbitrary point $A(X,Y)$ on the plane of the ellipse. We want to find point $B(x,y)$ on the ellipse such that $AB$ is perpendicular to the tangent of the ellipse at $B$. The slope of this tangent line is $y'=-\frac{b^2x}{a^2y}$, and therefore $\frac{y-Y}{x-X}=\frac{a^2y}{b^2x}$. From this we obtain the equation of rectangular hyperbola $y=\frac{xY}{\epsilon X-(\epsilon-1)x},$ where $\epsilon=\frac{a^2}{b^2}$. The intersection points $B_1,\ B_2,\ B_3$, and $B_4$ of this hyperbola with the ellipse give us the required normals $AB_1$, $AB_2$, $AB_3$, and $AB_4$. In his solution, Apollonius also used this hyperbola, which is now known as Apollonius hyperbola \cite{berger}, Sect. 17.5.5.6. The asymptotes of the hyperbola are $x=\frac{a^2X}{a^2-b^2}$ and $y=\frac{b^2Y}{b^2-a^2}$. One of the branches of this hyperbola passes through the center of the ellipse and therefore, there are at least 2 intersection points with the ellipse. The other branch may or may not intersect the ellipse. In the cases when $X=0$ and $Y=0$, the hyperbola degenerates to a pair of perpendicular lines $x=0,y=\frac{b^2Y}{b^2-a^2}$ and $x=\frac{a^2X}{a^2-b^2},y=0$, respectively. Let us denote by $n(A)$ the total number of intersections of the hyperbola with the ellipse. Since the intersection points are the solutions of a fourth order equation, $n(A)$ can not exceed 4. Let us find points $A$, where $n(A)$ jumps from 4 to 2. This happens when Apollonius hyperbola is tangent to the ellipse i.e. the slopes are equal at the intersection point: $-\frac{x}{\epsilon y}=\frac{\epsilon XY}{\left(\epsilon X-(\epsilon-1)x\right)^2}$. Using this and the equation of the ellipse, we obtain $$\frac{x}{a}=\frac{a}{a^2-b^2}\left(\sqrt[3]{\frac{b^2Y^2X}{a^2}}+X\right),\ \frac{y}{b}=\frac{b}{b^2-a^2}\left(\sqrt[3]{\frac{a^2X^2Y}{b^2}}+Y\right),$$ which when used back in the equation of the ellipse, after some simplifications gives
$$
\sqrt[3]{a^2X^2}+\sqrt[3]{b^2Y^2}=\sqrt[3]{\left(a^2-b^2\right)^2}. \eqno(2)
$$
It is the equation of \textit{astroida} in $X,Y$ coordinates. This curve is of 6th order and its parametric equations can be written as $\left(x,y\right)=\left(\frac{a^2-b^2}{a}\cos^3{t},\frac{a^2-b^2}{b}\sin^3{t}\right)$. In the interior region of this astroida $n(A)=4$. Outside of the astroida $n(A)=2$. On the astroida itself $n(A)=3$, except the vertices of the astroida $\left(\pm\frac{a^2-b^2}{a},0\right)$ and $\left(0,\pm\frac{a^2-b^2}{b}\right)$, where again $n(A)=2$. This is essentially what was done by Apollonius, which is a remarkable achievement, taking into account the mathematical tools available at the time. In \cite{berger}, Sect. 17.7.4 (see also p. 204, \cite{nikolsky}) it was mentioned that this astroida is the evolute of the ellipse and therefore drawing normals to the ellipse can be done by drawing tangent lines of the astroida.

Let us now suppose that the point $A(X,Y)$ is on the ellipse: $X=x,\ Y=y$. Since Apollonius hyperbola passes through $A(X,Y)$, one of the points $B_1$, $B_2$, $B_3$, and $B_4$, coincide with $A$. For the points of the ellipse (1) in the astroida (2), $n(A)=4$. For the points of the ellipse (1) outside the astroida (2), $n(A)=2$. For the intersection points $N_1,\ N_2,\ N_3,$ and $N_4$ of the ellipse (1) and the astroida (2), $n(A)=3$.  The coordinates of these points can be easily determined: $(\pm x_0, \pm y_0)$ and $(\pm x_0, \mp y_0)$, where $$x_0=\sqrt{\frac{a^4(a^2-2b^2)^3}{(a^2-b^2)(a^2+b^2)^3}},\ y_0=\sqrt{\frac{b^4(2a^2-b^2)^3}{(a^2-b^2)(a^2+b^2)^3}}.$$ Thus we proved the following

\begin{theorem} For the ellipse (1) and the astroida (2), the following cases are possible:
\begin{enumerate}
\item
If $a^2>2b^2$ then the points $(\pm x_0, \pm y_0)$ and $(\pm x_0, \mp y_0)$ separate the ellipse into 4 regions where $n(A)=4$ and $n(A)=2$. \item If $a^2\le 2b^2$, then for all the points of the ellipse (1), $n(A)=2$.
\end{enumerate}
\end{theorem}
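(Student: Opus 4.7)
The plan is to use the already-established classification of $n(A)$ by position relative to the astroida ($4$ inside, $3$ on the astroida away from cusps, $2$ outside or at cusps), and to describe how the ellipse (1) meets the astroida (2). First I would determine the intersection points of (1) and (2) by substituting $X=x$, $Y=y$ and solving; this is essentially the derivation immediately preceding the theorem, and it yields the displayed formulas for $x_0,y_0$. Their reality depends on the sign of $(a^2-2b^2)^3$, so the two curves have four real intersections $(\pm x_0,\pm y_0)$ when $a^2>2b^2$, meet only at $(0,\pm b)$ when $a^2=2b^2$, and are disjoint when $a^2<2b^2$.

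For Case~(1), I would note that the horizontal cusps $(\pm(a^2-b^2)/a,0)$ of the astroida always lie inside the ellipse (since $(a^2-b^2)/a<a$), while the vertical cusps $(0,\pm(a^2-b^2)/b)$ lie strictly outside when $a^2>2b^2$ (since then $(a^2-b^2)/b>b$). Equivalently, the ellipse's major-axis vertex $(\pm a,0)$ sits outside the astroida (the left side of (2) there is $a^{4/3}>(a^2-b^2)^{2/3}$) while the minor-axis vertex $(0,\pm b)$ sits inside (value $b^{4/3}<(a^2-b^2)^{2/3}$). Going around the ellipse the four intersection points $(x_0,y_0)$, $(-x_0,y_0)$, $(-x_0,-y_0)$, $(x_0,-y_0)$ appear in this cyclic order, cutting off four arcs that contain in turn $(0,b)$, $(-a,0)$, $(0,-b)$, $(a,0)$. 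Hence the values $n(A)$ alternate $4,2,4,2$ around the ellipse, exactly as the theorem states.

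For Case~(2), I would show that the ellipse lies on or outside the astroida everywhere. Parametrizing as $(a\cos\phi,b\sin\phi)$, set $f(\phi)=a^{4/3}\cos^{2/3}\phi+b^{4/3}\sin^{2/3}\phi$ (the left side of (2) evaluated at an ellipse point). A short differentiation shows $f'(\phi)=0$ precisely on the axes and at $\tan\phi=\pm b/a$, with the latter giving an interior maximum $(a^2+b^2)^{2/3}$; the axial values $a^{4/3}$ and $b^{4/3}$ are then the two local minima, and since $a>b$ the global minimum of $f$ is $b^{4/3}$. The inequality $f(\phi)\ge(a^2-b^2)^{2/3}$ is therefore equivalent to $b^{4/3}\ge(a^2-b^2)^{2/3}$, i.e.\ $a^2\le 2b^2$, which is our hypothesis. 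So the ellipse lies on or outside the astroida, and $n(A)=2$ throughout (including at the boundary points $(0,\pm b)$ when $a^2=2b^2$, since these are astroida cusps at which $n=2$ by the general principle already stated).

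The main obstacle I expect is Case~(2): it is not enough to observe that the intersection formulas fail to be real, one must globally verify that the ellipse does not dip inside the astroida, and this requires the minimization of $f$ above. Case~(1) by contrast is mostly bookkeeping once the cusp/vertex comparisons are in hand, and the alternation $4,2,4,2$ is forced by the reflection symmetries of both curves across the coordinate axes.
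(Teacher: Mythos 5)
Your proposal is correct and follows essentially the same route as the paper: the paper's proof is precisely the discussion preceding the theorem, which classifies $n(A)$ by position relative to the astroida and computes the intersection points $(\pm x_0,\pm y_0)$, whose reality hinges on the sign of $a^2-2b^2$. Your additional work in Case (1) (cusp/vertex comparisons forcing the $4,2,4,2$ alternation) and Case (2) (minimizing $f(\phi)$ to show the ellipse stays outside the astroida) simply makes rigorous the steps the paper treats as evident, and is sound apart from the harmless slip that $f$ is non-differentiable (not critical) at the axial points, which are nonetheless its local minima.
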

Noting this, we can say that Apollonius problem for the number of concurrent normals of an ellipse is completely solved. There is also a three dimensional variant of this problem, where one takes point $A(X,Y,Z)$ outside of the plane of the ellipse $\frac{x^2}{a^2}+\frac{y^2}{b^2}=1,\ z=0$ and counts the number of lines $AB$, such that $B(x,y,0)$ is on the ellipse, and $AB$ is perpendicular to the tangent of the ellipse at the point $B$. But this variant is easily reduced to the planar case. Consider the projection $A'(X,Y,0)$ of $A$ onto the plane $z=0$. If $A'B$ is a normal of the ellipse then by The Theorem of the Three Perpendiculars, $AB$ is also perpendicular to the tangent of the ellipse at the point $B$. Therefore, $n(A)$ is 2, 3, or 4 dependending on the position of point $A$ with respect cylindrical surface defined by the same equation for the astroida (2).

Apollonius did not mention any practical uses for his results, except that these normals corresponding to minimal and maximal distances, are worth investigating for their own sake and that, in contrast to the tangents (See Appendix), the normals were not studied much by the earlier mathematicians. Because of this connection with the extremal distances, there can be applications in optics, wavefronts, mathematical billiards, etc. One of the applications of these results in astronomy can be a possible explanation for the presence of 4 images of a distant quasar, whose light is being bent around approximately elliptical Einstein Ring formed by two galaxies 3.4 billion light-years away \cite{hubble} (see also Figure 1 in \cite{schmidt}).

\begin{figure}[htbp]
\centerline{\includegraphics[scale=.5]{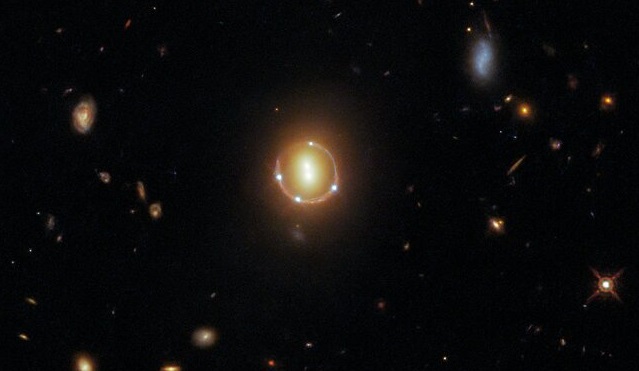}}
\label{fig2}
Figure 2: A possible application of Apollonius Problem. Image credit: ESA/Hubble \& NASA, T. Treu; Acknowledgment: J. Schmidt. Public domain. Used with permission.
\end{figure}

\textbf{3. Apollonius problem for space}

Let us now consider three dimensional generalization of this problem. How many concurrent normals of an ellipsoid are there? In this form the problem was studied through analytic methods in \cite{joach}, \cite{clebsch} (see also \cite{geiser}, \cite{niem}  for geometric considerations) and generalized for higher dimensions in \cite{muller}. The answer to this question will be given in the next section. The literature about the problem of normals to surfaces of second order is vast and we refer the reader to Chapter III, Sections E2 and E3 of \cite{nad}, which contains detailed discussion of the history and many references for this 3D case and the previous planar case.

Let an ellipsoid be defined by
$$
\frac{x^2}{a^2}+\frac{y^2}{b^2}+\frac{z^2}{c^2}=1, \eqno(3)
$$
where we assume that $a>b>c>0$. Let us take an arbitrary point $A(X,Y,Z)$ and find the number $n(A)$ of points $B(x,y,z)$ on the ellipsoid such that $AB$ is the normal line of the plane tangent to the ellipsoid at $B$. Since the outer normal vector of the plane tangent to the ellipsoid at $B(x,y,z)$ is $\textbf{N}=\left(\frac{x}{a^2},\frac{y}{b^2},\frac{z}{c^2}\right)$, $$\frac{x-X}{\frac{x}{a^2}}=\frac{y-Y}{\frac{y}{b^2}}=\frac{z-Z}{\frac{z}{c^2}}=-t,$$
where $t$ is a parameter. From this we find parametric representation of \textit{cubic hyperbola} (see p. 204, \cite{nad})
$$\textbf{r}(t)=\left(\frac{a^2X}{a^2+t},\frac{b^2Y}{b^2+t},\frac{c^2Z}{c^2+t}\right),$$
whose intersections with the ellipsoid give the base points of the normals through $A$. 
The asymptotes of this curve are the lines
$$\textbf{r}_1(t)=\left(t,\frac{b^2Y}{b^2-a^2},\frac{c^2Z}{c^2-a^2}\right),$$
$$\textbf{r}_2(t)=\left(\frac{a^2X}{a^2-b^2},t,\frac{c^2Z}{c^2-b^2}\right),$$
$$\textbf{r}_3(t)=\left(\frac{a^2X}{a^2-c^2},\frac{b^2Y}{b^2-c^2},t\right).$$
If $X=0$, $Y=0$, and $Z=0$ then cubic hyperbola splits into a line, which served earlier as an asymptote of the cubic hyperbola, and a hyperbola:
$$\textbf{r}(t)=\textbf{r}_1(t),\ \textbf{r}(t)=\left(0,\frac{b^2Y}{b^2+t},\frac{c^2Z}{c^2+t}\right);$$
$$\textbf{r}(t)=\textbf{r}_2(t),\ \textbf{r}(t)=\left(\frac{a^2X}{a^2+t},0,\frac{c^2Z}{c^2+t}\right);$$
$$\textbf{r}(t)=\textbf{r}_3(t),\ \textbf{r}(t)=\left(\frac{a^2X}{a^2+t},\frac{b^2Y}{b^2+t},0\right),$$
respectively. The cubic hyperbola passes through the center of the ellipsoid when $t=\pm \infty$, and goes to infinity when $t=-a^2,-b^2,-c^2$. Therefore, there are at least 2 intersections with the ellipsoid. For example, one can take the points of the ellipsoid with maximal and minimal distances from $A$. On the other hand, these intersections are determined by
$$\left(\frac{aX}{a^2+t}\right)^2+\left(\frac{bY}{b^2+t}\right)^2+\left(\frac{cZ}{c^2+t}\right)^2=1, \eqno(4)$$
which is a sixth order equation with respect to $t$, and therefore can not have more than 6 real solutions.
\begin{figure}[htbp]
\centerline{\includegraphics[scale=.9]{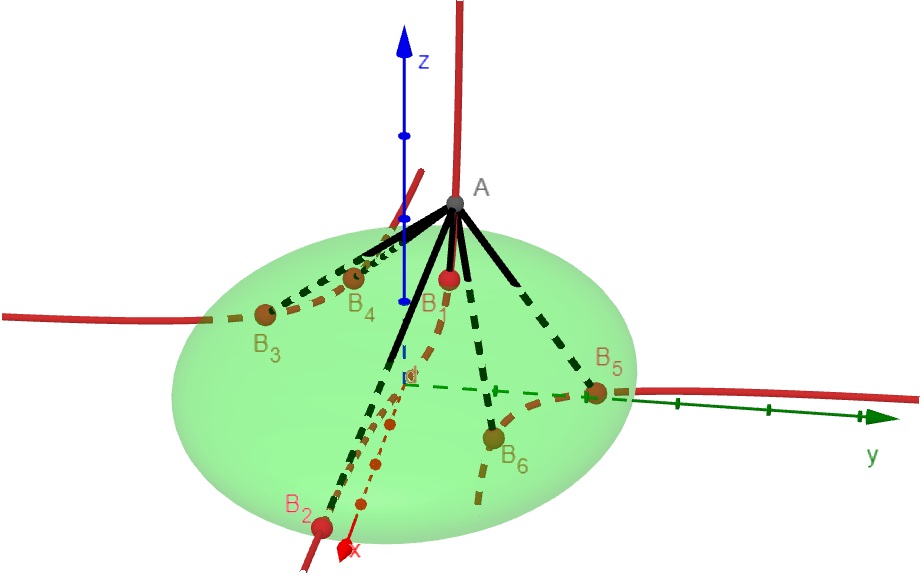}}
\label{fig3}
Figure 3: Ellipsoid $\frac{x^2}{a^2}+\frac{y^2}{b^2}+\frac{z^2}{c^2}=1$ (green), its 6 normals $AB_1, AB_2,\ldots ,AB_6$ (black), cubic hyperbola $\textbf{r}(t)=\left(\frac{a^2X}{a^2+t},\frac{b^2Y}{b^2+t},\frac{c^2Z}{c^2+t}\right)$ (red).  Created using GeoGebra.
\end{figure}
As before, let us denote the number of normals through $A$ by $n(A)$. We want to find points $A$, where $n(A)$ jumps from 2 to 4, or from 4 to 6. This happens when cubic hyperbola is tangent to the ellipsoid i.e. $\textbf{r}'(t)=\left(-\frac{a^2X}{(a^2+t)^2},-\frac{b^2Y}{(b^2+t)^2},-\frac{c^2Z}{(c^2+t)^2}\right),$ is orthogonal to $\textbf{N}=\left(\frac{X}{a^2+t},\frac{Y}{b^2+t},\frac{Z}{c^2+t}\right)$. This can be expressed as $\textbf{r}'(t)\cdot\textbf{N}=0$, or as
$$\frac{a^2X^2}{(a^2+t)^3}+\frac{b^2Y^2}{(b^2+t)^3}+\frac{c^2Z^2}{(c^2+t)^3}=0. \eqno(5)$$
The equations (4) and (5) define the surface known as \textit{Caustics of an Ellipsoid} also known as \textit{focal surface, surface of centers, evolute of an ellipsoid}, or just \textit{Cayley's astroida} \cite{cayley} (see also p. 165, \cite{weath}). Cayley used the name \textit{Centro-surface of an Ellipsoid}, and the equations (4) and (5), which appear in p. 358 of \cite{cayley}, were obtained using the fact that the points of this surface are the centers of principal curvatures of the ellipsoid (3) (see also p. 218 in \cite{salmon}). The centers and principal radii of curvature for more general surfaces were studied by G. Monge in Sect. XXV of \cite{monge}. For a modern definition of the principal curvatures, see, for example, p. 158, \cite{fischer}. A. Cayley's graph of the surface appears in p. 330 of \cite{cayley} (also shown in p.116, \cite{forsyth}). One can also find many other images depicting this surface in various papers, dissertations, and books. See for example pp. 49-53 in  \cite{banc}, p. 154 in Ch. 7 of \cite{banc2}, \cite{dom}, \cite{itoh}, p. 218 in \cite{afraj}, p. 257 in \cite{yang},  p. 49 in \cite{joet} (also shown in p. 356, \cite{berger3}), \cite{yoshi}, p. 61 in \cite{junker}, p. 10 in \cite{cherrie}. The part of the surface where the two surfaces corresponding to minimal and maximal curvatures intersect (named as "the purse"), was shown and mentioned in pages 37 and 109, respectively, of  \cite{arnold3} (see also p. 218 in \cite{afraj}). S.K. Lando gave two popular lectures about the caustics, available online, one with a demonstration of the surface at the end \cite{lando}. Another representation of the surface together with some applications of it in astronomy and physics appeared in \cite{tsar2} (See also \cite{tsar}). According to \cite{tsar2}, the idea of using more general caustics in cosmology is due to Ya. B. Zel'dovich (see  \cite{zeld2}, \cite{zeld} and the references therein).

There are many visualizations of this surface as a physical model. Before the dawn of computer graphics and 3D printers, handmade models and sculptures represented the best medium for such mathematical objects \cite{fischer}. In \cite{kummer}, there is a description of a model made out of gypsum by student H.A. Schwarz in the Arts Faculty (later Prof. in Univ. Berlin), which is also mentioned in Sect. 197 (p. 282), \cite{dyck} (see also p. 198, \cite{nad}). Stereographic photo of one such model by unknown artist/maker from the same time period is shown in Figure 4, \cite{getty}. Two more models of this surface together with models of centro-surfaces of paraboloids and hyperboloids can be found in The Collection of Mathematical Models and Instruments in The University of Göttingen \cite{goet} (See Fig. 5). Similar models for the centers of curvature of paraboloids and hyperboloids were described in \cite{schroder} (see also \cite{caspari}) and p. 283 in \cite{dyck} (see also \cite{dyck2}), respectively (see also p. 264 and p. 34, respectively, in \cite{junker}). See also the website of \textit{The TouchGeometry Project} \cite{space} for models of caustics of an elliptical paraboloid and a hyperboloid of one sheet in Geometry Department of Karazin University in Kharkiv, Ukraine. Another such model is in The National Museum of American History \cite{brill}.
\begin{figure}[htbp]
\centerline{\includegraphics[scale=.3]{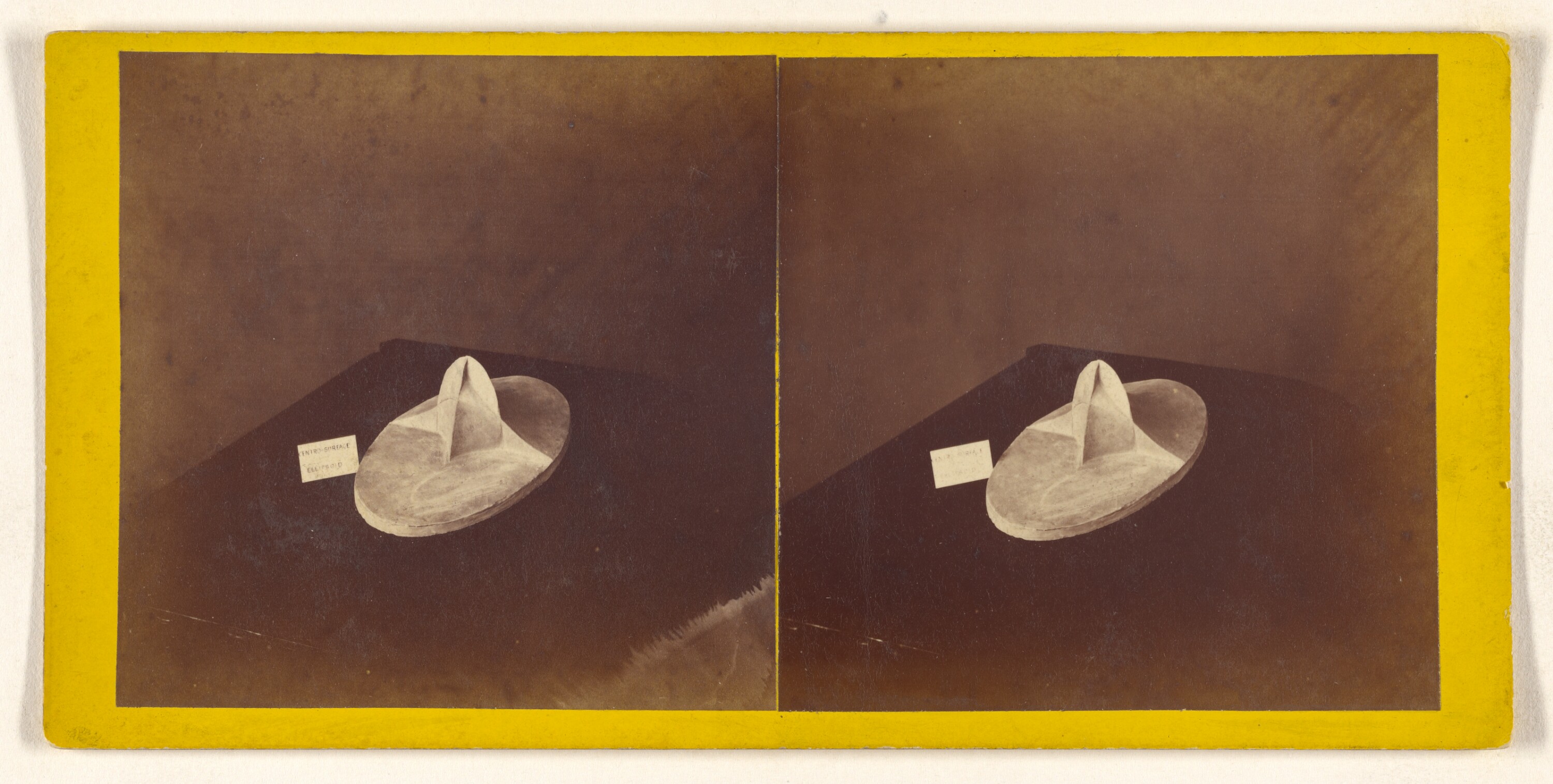}}
\label{fig4}
Figure 4: Stereograph Card, Unknown artist/maker, Centro-Surface. Ellipsoid. about 1860. 
Gift of Weston J. and Mary M. Naef, Getty Museum Collection. Open Content program. No copyright. Used with permission.
\end{figure}

\begin{figure}[htbp]
\centerline{\includegraphics[scale=.2]{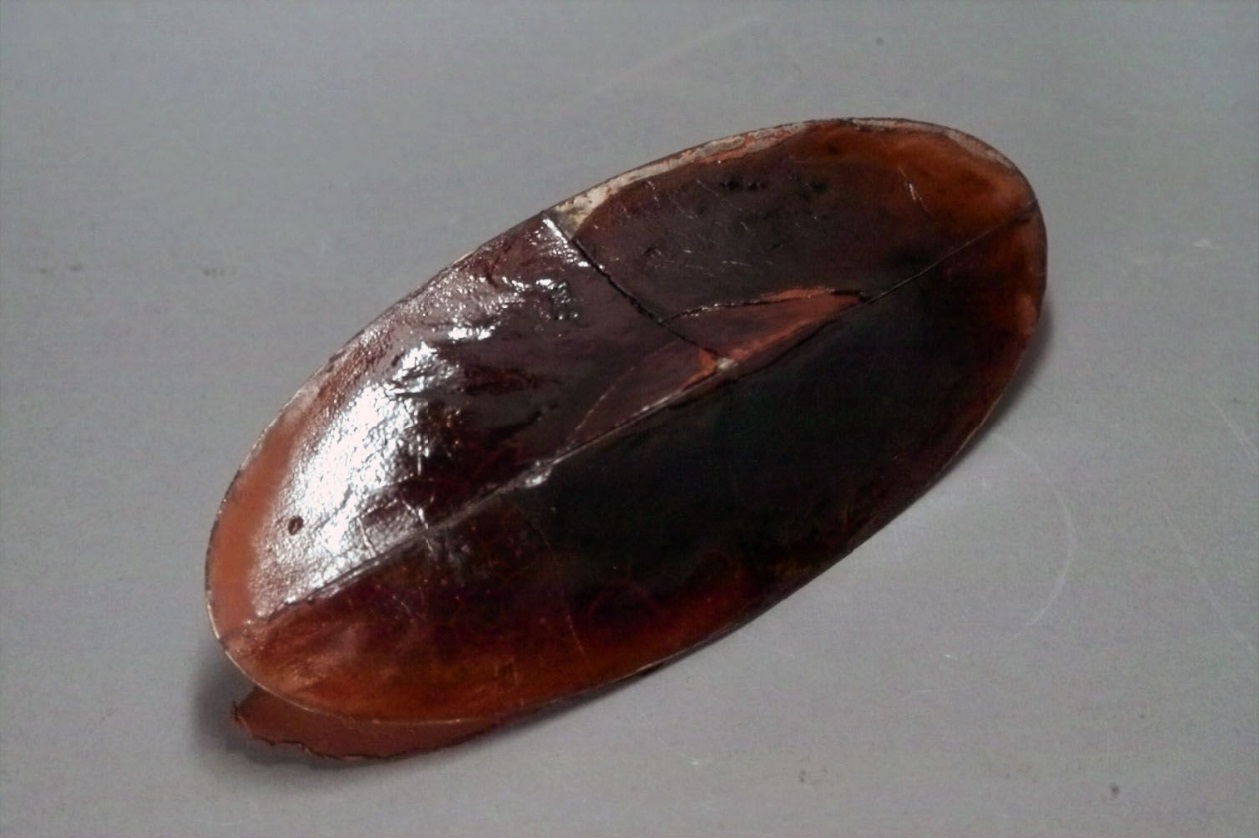}
\includegraphics[scale=.3]{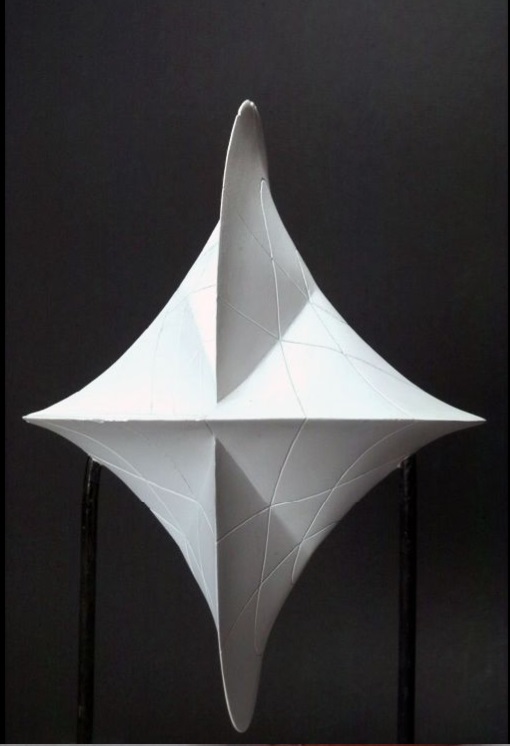}}
\label{fig5}
Figure 5: Curvature centre point surface models: 239 and 242.
Curvature centre point surface of the triaxial ellipsoid. Gypsum; curved surfaces are unified. (Göttinger Sammlung mathematischer Modelle und Instrumente,  Georg-August-Universität Göttingen)
\end{figure}

Note that in general, it is not easy to exclude the parameter $t$ from the equations (4) and (5), to get an explicit equation for the caustics (see \cite{salmon}, p. 113 in \cite{forsyth}). This surface is of 12th order and its equation can be written as a $5\times 5$ determinant (see p. 114 in \cite{forsyth}). But if, for example, $c=0$, then the equations (4) and (5) are transformed to
$$\left(\frac{aX}{a^2+t}\right)^2+\left(\frac{bY}{b^2+t}\right)^2=1,\ \frac{a^2X^2}{(a^2+t)^3}+\frac{b^2Y^2}{(b^2+t)^3}=0,$$
from which one can easily eliminate parameter $t$, and obtain the equation (2) for the astroida. This gives us another solution for the planar case considered in the previous section. Similarly, if $b=c$ then one can introduce a new variable $Y'$, such that $(Y')^2=Y^2+Z^2$ and then the equations (4) and (5) can be written as
$$\left(\frac{aX}{a^2+t}\right)^2+\left(\frac{bY'}{b^2+t}\right)^2=1,\ \frac{a^2X^2}{(a^2+t)^3}+\frac{b^2(Y')^2}{(b^2+t)^3}=0,$$
from which again the parameter $t$ is easily eliminated to get $$
\sqrt[3]{a^2X^2}+\sqrt[3]{b^2(Y')^2}=\sqrt[3]{\left(a^2-b^2\right)^2},
$$
or
$$
\sqrt[3]{a^2X^2}+\sqrt[3]{b^2(Y^2+Z^2)}=\sqrt[3]{\left(a^2-b^2\right)^2},
$$
which is a surface of revolution generated by rotating astroida (2) around $x$ axis (see Fig. 6).
\begin{figure}[htbp]
\centerline{\includegraphics[scale=1.4]{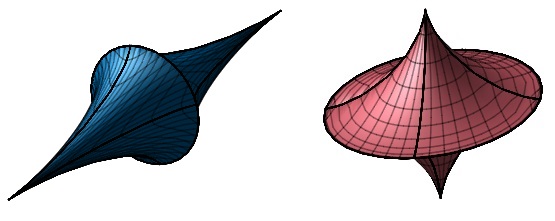}}
\label{fig6}
Figure 6: The surface of revolution generated by rotating the astroida around one of its axes. Created using Maple 2022 for the cases $(a,b,c)= (4,4,3)$ (left) and $(4.1,3,3)$ (right).
\end{figure}

\textbf{4. Caustics of Ellipsoid in GeoGebra and Maple} In this section a method of generating the surface, based on the cartesian coordinates, will be described. The formulas for Gaussian curvature and mean curvature of an ellipsoid are given in Corollary 13.41, p. 413 in \cite{abbena} (see also Chapter 4, \cite{bektas} for applications in geodesy):
$$K(x,y)=\frac{1}{\left(abc\left(\frac{x^2}{a^4}+\frac{y^2}{b^4}+\frac{z^2}{c^4}\right)\right)^2},\ H(x,y)=\frac{|x^2+y^2+z^2-a^2-b^2-c^2|}{2(abc)^2\left(\frac{x^2}{a^4}+\frac{y^2}{b^4}+\frac{z^2}{c^4}\right)^{\frac{3}{2}}}.
$$
The principal curvatures $k_1$ and $k_2$ are the roots of the quadratic equation $x^2-2Hx+K=0$ (Corollary 13.26, p. 400, in \cite{abbena}):
$$
k_1=H-\sqrt{H^2-K},\ k_2=H+\sqrt{H^2-K}.
$$
The corresponding radii of the curvature are $R_1=\frac{1}{k_1}$ and $R_2=\frac{1}{k_2}$, and the respective centers of the curvature $C_1(x_1,y_1,z_1)$ and $C_2(x_1,y_1,z_1)$ can be determined using the formula (see p. 226, \cite{eisen})
$$
C_1(x_1,y_1,z_1)=(x,y,z)-R_1\cdot \frac{\textbf{N}}{|\textbf{N}|},\ C_2(x_2,y_2,z_2)=(x,y,z)-R_2\cdot \frac{\textbf{N}}{|\textbf{N}|},
$$
where as before outer normal is $\textbf{N}=\left(\frac{x}{a^2},\frac{y}{b^2},\frac{z}{c^2}\right)$. The GeoGebra Activity demonstrating the surface, can be found in \href{https://www.geogebra.org/m/fchadfbt}{https://www.geogebra.org}. The Maple Learn document can be found in \href{https://learn.maplesoft.com/d/FPPUAPOJFMJFDQJNBJCQKUNUARLTOFPIDSCGLPMJALNOLNOQJQJGJGBHDFKQIRMOLTFFHTIOPMETGNMLDUFNAJFLDIFPLIFSOTGI}{https://learn.maplesoft.com}. The images created using GeoGebra and Maple 2022 are shown in Figure 7 and Figure 8, respectively.
\begin{figure}[htbp]
\centerline{\includegraphics[scale=1]{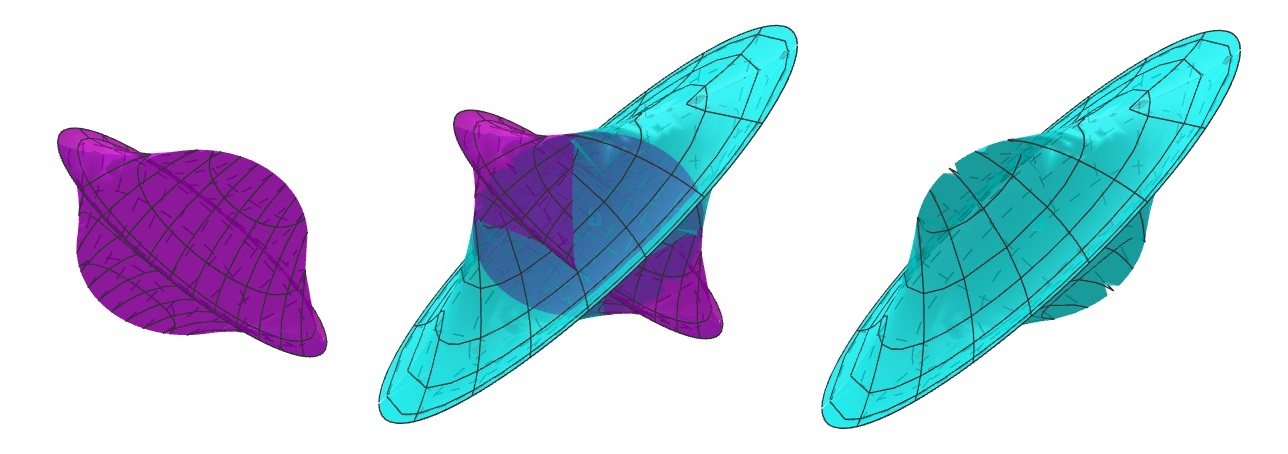}}
\label{fig7}
Figure 7: The centers corresponding to smaller (left, $R_2$) and greater (right, $R_1$) principal radii of curvature. The surfaces intersect (center). Created using GeoGebra.
\end{figure}

\begin{figure}[htbp]
\centerline{\includegraphics[scale=1.4]{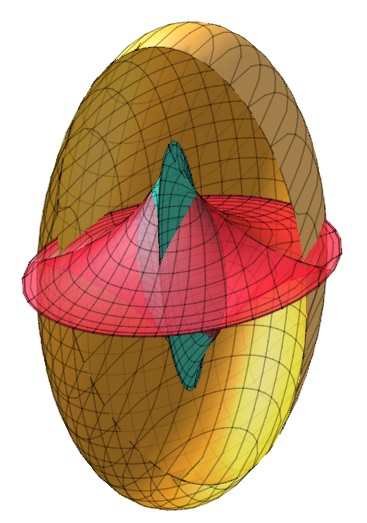} \includegraphics[scale=1.4]{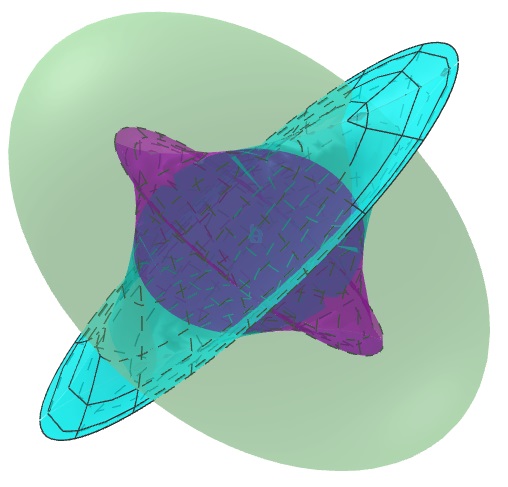}}
\label{fig8}
Figure 8: The ellipsoid and its caustics with transparency applied. Created using Maple 2022 (left) and GeoGebra (right).
\end{figure}

We can now answer the question asked at the beginning of Sect. 2. The number of normals outside of the two caustics is 2 ($n(A)=2$). For the points of the space inside of only one and both of the caustics, $n(A)=4$ and $n(A)=6$, respectively (see \cite{joach}, p. 123-124). On the caustics $n(A)=3$ or $n(A)=5$, with some exceptions on the planes $X=0$, $Y=0$, and $Z=0$ and on the intersections of the two caustics, where again $n(A)=2$ or $n(A)=4$ (see Fig. 9).

In the case of an ellipsoid of revolution, for example, when $a=b$ or $b=c$, one of the caustics becomes a surface of revolution, shown in Figure 6, the other caustic degenerates to a line segment on the axis of symmetry of the surfaces shown in Figure 6, between the vertices. The number of normals of the ellipsoid (3) for the points $A$ on this line segment is infinite ($n(A)=\infty$), except the endpoints of this line segment where $n(A)=2$. For the other points of the space, the situation is identical to the planar case considered in Sect. 2.

\begin{figure}[htbp]
\centerline{\includegraphics[scale=1.3]{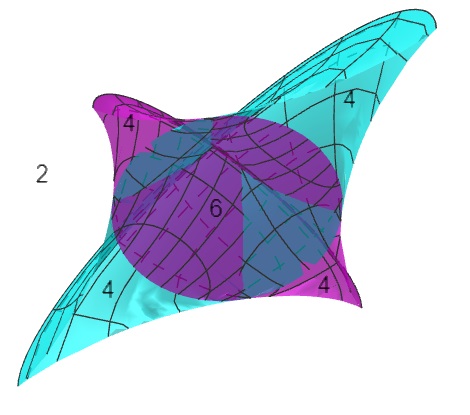}}
\label{fig9}
Figure 9: The number of normals in the regions of space separated by the caustics of the ellipsoid. Half of the caustics is hidden to make the inner regions visible.
\end{figure}

\textbf{5. The intersections of an ellipsoid and its caustics with the coordinate planes}
 
The intersection curves of the ellipsoid (3) and its caustics (see Figure 10) with the coordinate planes $x=0,\ y=0,\ z=0$ are found in p. 325, \cite{cayley}
(see also p. 115, \cite{forsyth}).

\begin{lemma} The intersections of the ellipsoid (3) and its caustics with the coordinate planes are the following curves:
\begin{enumerate}
\item Ellipse $\left(a\cos{t},b\sin{t},0\right)$ (black),
\item Ellipse $\left(a\cos{t},0,c\sin{t}\right)$ (yellow),
\item Ellipse $\left(0,b\cos{t},c\sin{t}\right)$ (red),
\item Astroida $\left(\frac{a^2-b^2}{a}\cos^3{t},\frac{a^2-b^2}{b}\sin^3{t},0\right)$ (pink),
\item Astroida $\left(\frac{a^2-c^2}{a}\cos^3{t},0,\frac{a^2-c^2}{c}\sin^3{t}\right)$ (light blue),
\item Astroida $\left(0,\frac{b^2-c^2}{b}\cos^3{t},\frac{b^2-c^2}{c}\sin^3{t}\right)$ (purple),
\item Ellipse $\left(\frac{a^2-c^2}{a}\cos{t},\frac{b^2-c^2}{b}\sin{t},0\right)$ (green),
\item Ellipse $\left(\frac{a^2-b^2}{a}\cos{t},0,\frac{b^2-c^2}{c}\sin{t}\right)$ (dark blue),
\item Ellipse $\left(0,\frac{a^2-b^2}{b}\cos{t},\frac{a^2-c^2}{c}\sin{t}\right)$ (orange).
\end{enumerate}
\end{lemma}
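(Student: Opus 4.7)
The plan is to split the nine curves into three groups. Curves 1--3 are immediate: substituting $z=0$, $y=0$, or $x=0$ into the ellipsoid equation (3) yields a coordinate ellipse with the stated parametrization.

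For the remaining six curves the key observation is that at any ellipsoid point $(x_0,y_0,0)$ in the plane $z=0$, the outward normal $(x_0/a^2,y_0/b^2,0)$ also lies in that plane, so both centers of principal curvature at such a point belong to $z=0$ as well. Consequently, the intersection of the caustics with the plane $z=0$ coincides with the locus of centers of principal curvature as $(x_0,y_0,0)$ ranges over the equatorial ellipse, and analogous reductions hold in the two other coordinate planes.

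To identify the two components lying in $z=0$, I would substitute $Z=0$ into the caustic equations (4) and (5). Generically, for $t\neq -c^2$, the $Z$-terms drop out and the system collapses to the planar pair already eliminated in Sect.\ 2, producing the astroida $\sqrt[3]{a^2X^2}+\sqrt[3]{b^2Y^2}=\sqrt[3]{(a^2-b^2)^2}$ with parametrization $\left(\frac{a^2-b^2}{a}\cos^3 t,\frac{a^2-b^2}{b}\sin^3 t,0\right)$; this is curve 4. The exceptional branch $t=-c^2$ corresponds to the principal direction pointing along the $z$-axis at an equatorial point; substituting $t=-c^2$ into (4) gives $\left(\frac{aX}{a^2-c^2}\right)^2+\left(\frac{bY}{b^2-c^2}\right)^2=1$, parametrized by $\left(\frac{a^2-c^2}{a}\cos t,\frac{b^2-c^2}{b}\sin t,0\right)$, which is curve 7. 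Cyclic relabelling of the roles of $(a,b,c)$ in the planes $y=0$ and $x=0$ then produces the pairs (5, 8) and (6, 9).

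The only delicate step is the justification of the exceptional branch $t=-c^2$ and its analogues, since the corresponding term in (5) is formally $0/0$ when $Z=0$. I would close this gap either by a careful limit along the caustic surface, or more concretely by invoking the curvature formulas of Sect.\ 4 to verify directly that the transverse radius of curvature at $(a\cos\theta,b\sin\theta,0)$ equals $c^2\sqrt{\cos^2\theta/a^2+\sin^2\theta/b^2}$, so that subtracting it along the unit outer normal reproduces the ellipse of curve 7 exactly. Apart from this bookkeeping, no serious obstacles are anticipated.
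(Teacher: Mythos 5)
Your proposal is correct, but it takes a genuinely different route from the paper, which in fact gives no proof of this lemma at all: it simply cites Cayley (p.~325) and Forsyth (p.~115), where these sections are read off from the confocal parametrization of the centro-surface that the paper reproduces at the end of Sect.~6. In that parametrization $z^2$ is proportional to $(c^2+\xi)^3(c^2+\eta)$, so the section $z=0$ forces $\xi=-c^2$ or $\eta=-c^2$, and the two cases yield exactly Astroida 4 and Ellipse 7 (the other coordinate planes follow by the same computation). Your argument is self-contained and more elementary: curves 1--3 by substitution; the generic branch $t\neq-c^2$ of (4)--(5) with $Z=0$ collapsing to the planar astroida, which mirrors what the paper itself does in Sect.~3 for the degenerate cases $c=0$ and $b=c$; and the indeterminate branch $t=-c^2$ resolved by the transverse curvature at equatorial points. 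That last computation does work: writing $m=\sqrt{\cos^2\theta/a^2+\sin^2\theta/b^2}$, the formulas of Sect.~4 give $K=\frac{1}{(abc\,m^2)^2}$ and $2H=\frac{a^2\sin^2\theta+b^2\cos^2\theta+c^2}{(abc)^2m^3}$, one checks that $k=\frac{1}{c^2m}$ satisfies $k^2-2Hk+K=0$, and subtracting $R=c^2m$ along the unit normal lands on $\left(\frac{a^2-c^2}{a}\cos\theta,\frac{b^2-c^2}{b}\sin\theta,0\right)$, i.e.\ Ellipse 7, while the other root $R=a^2b^2m^3$ reproduces Astroida 4. One step you should not leave as a one-word inference (\enquote{consequently}) is that the caustic section \emph{coincides} with the locus of centers over the equatorial ellipse: symmetry only gives one inclusion, and you must also rule out that a center of curvature of a point with $z\neq0$ lands in $z=0$; this does follow from your own case analysis in $t$, or, in the confocal picture, from the fact that the center's $z$-coordinate equals $z(c^2+\xi)/c^2$ (resp.\ $z(c^2+\eta)/c^2$), which cannot vanish for $z\neq0$. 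In exchange for being less systematic than Cayley's parametrization, which treats all three planes and both sheets uniformly and identifies which arc lies on which caustic, your route uses only the formulas the paper has already set up in Sects.~3 and 4.
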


\begin{figure}[htbp]
\centerline{\includegraphics[scale=1.3]{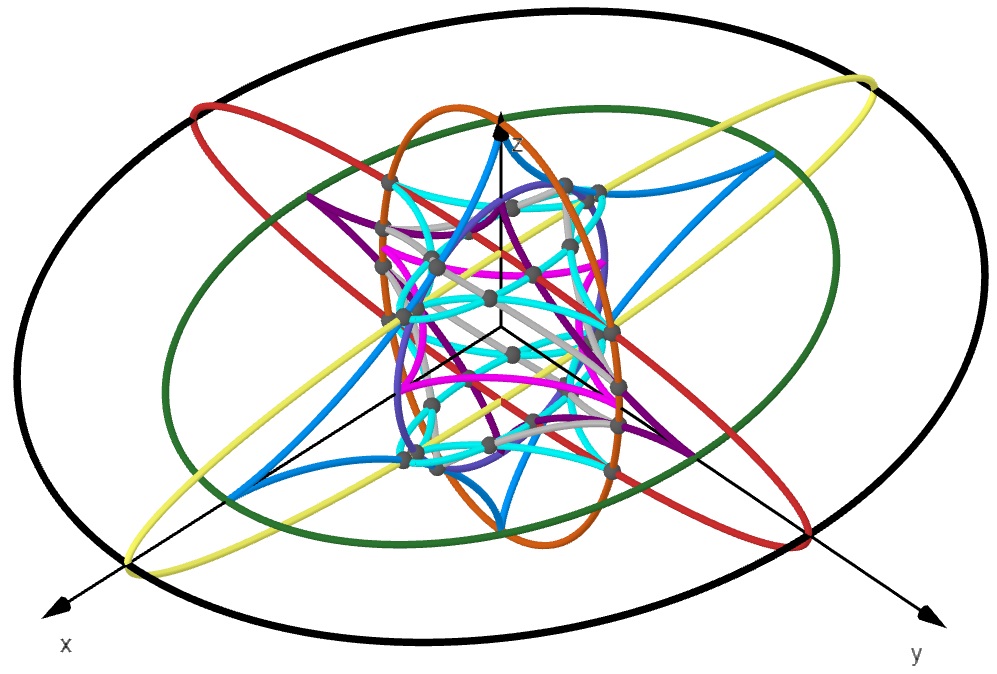}}
\label{fig10}
Figure 10: The intersections of a triaxial ellipsoid and its caustics with the coordinate planes, the nodal curve (grey), and the intersections of the ellipsoid with its caustics (cyan blue). The other colors are explained in Lemma 2. The tangency points of the two caustics and some of the intersection points are also shown. See \url{https://www.geogebra.org/3d/tqjwgxwg} for more details.
\end{figure}

Let us now find intersections of Ellipses 1,2,3, Astroidas 4,5,6 and Ellipses 7,8,9, respectively.
\begin{lemma}

\begin{enumerate}
\item If $a^2\ge 2b^2$, then Ellipse 1 and Astroida 4 intersect at $(\pm x_0, \pm y_0,0)$ and 
$(\pm x_0, \mp y_0,0)$, where $$x_0=\sqrt{\frac{a^4(a^2-2b^2)^3}{(a^2-b^2)(a^2+b^2)^3}},\ y_0=\sqrt{\frac{b^4(2a^2-b^2)^3}{(a^2-b^2)(a^2+b^2)^3}}.$$
\item If $a^2\ge 2c^2$, then Ellipse 2 and Astroida 5 intersect at $(\pm x_1,0, \pm z_1)$ and 
$(\pm x_1,0, \mp z_1)$, where $$x_1=\sqrt{\frac{a^4(a^2-2c^2)^3}{(a^2-c^2)(a^2+c^2)^3}},\ z_1=\sqrt{\frac{c^4(2a^2-c^2)^3}{(a^2-c^2)(a^2+c^2)^3}}.$$
\item If $b^2\ge 2c^2$, then Ellipse 3 and Astroida 6 intersect at $(0,\pm y_2, \pm z_2)$ and 
$(0,\pm y_2, \mp z_2)$, where $$y_2=\sqrt{\frac{b^4(b^2-2c^2)^3}{(b^2-c^2)(b^2+c^2)^3}},\ z_2=\sqrt{\frac{c^4(2b^2-c^2)^3}{(b^2-c^2)(b^2+c^2)^3}}.$$

\item Ellipse 1 and Ellipse 7 do not have real intersection points and the coordinates of the non-real intersection points are $(\pm x^*,\pm y^*,0)$ and 
$(\pm x^*,\mp y^*,0)$, where $$x^*=\sqrt{\frac{a^2(a^2-c^2)^2(2b^2-c^2)}{(a^2-b^2)(2a^2b^2-a^2c^2-b^2c^2)}},$$
$$y^*=\sqrt{\frac{b^2(b^2-c^2)^2(2a^2-c^2)}{(b^2-a^2)(2a^2b^2-b^2c^2-a^2c^2)}}.$$

\item If $b^2\ge 2c^2$, then Ellipse 2 and Ellipse 8 intersect at $(\pm x_3,0, \pm z_3)$ and 
$(\pm x_3,0, \mp z_3)$, where $$x_3=\sqrt{\frac{a^2(a^2-b^2)^2(2c^2-b^2)}{(a^2-c^2)(2a^2c^2-a^2b^2-b^2c^2)}},$$
$$z_3=\sqrt{\frac{c^2(c^2-b^2)^2(2a^2-b^2)}{(c^2-a^2)(2a^2c^2-a^2b^2-b^2c^2)}},$$
and $x_1\ge x_3$, $z_1\le z_3$ with equality cases when $\frac{1}{a^2}+\frac{1}{c^2}=\frac{3}{b^2}$.

\item If $2b^2\ge a^2\ge 2c^2$, then Ellipse 3 and Ellipse 9 intersect at $(0,\pm y_4, \pm z_4)$ and 
$(0,\pm y_4, \mp z_4)$, where $$y_4=\sqrt{\frac{b^2(b^2-a^2)^2(2c^2-a^2)}{(b^2-c^2)(2b^2c^2-a^2b^2-a^2c^2)}},$$
$$z_4=\sqrt{\frac{c^2(c^2-a^2)^2(2b^2-a^2)}{(c^2-b^2)(2b^2c^2-a^2b^2-a^2c^2)}}.$$
In particular, $y_2\ge y_4$ and $z_2\le z_4$ if and only if $2 b^4+2 c^4-a^2 b^2-a^2 c^2-2 b^2 c^2\ge 0$.

\item If $a^2+c^2\ge 2b^2$, then Astroida 4 and Ellipse 7 intersect at $(\pm x_5, \pm y_5,0)$ and 
$(\pm x_5, \mp y_5,0)$, where $$x_5=\sqrt{\frac{(a^2-c^2)^3(2b^2-a^2-c^2)^3}{a^2(b^2-a^2)(a^2+b^2-2c^2)^3}},\ y_5=\sqrt{\frac{(b^2-c^2)^3(2a^2-b^2-c^2)^3}{b^2(a^2-b^2)(a^2+b^2-2c^2)}}.$$
\item Astroida 5 and Ellipse 8 are tangent to each other at the points $(\pm x_6,0, \pm z_6)$ and 
$(\pm x_6,0, \mp z_6)$, where
$$x_6=\sqrt{\frac{(b^2-c^2)^3}{c^2(a^2-c^2)}},\ z_6=\sqrt{\frac{(a^2-b^2)^3}{a^2(a^2-c^2)}},$$
These points also divide Astroida 5 and Ellipse 8 into parts which belong to different caustics. These points are on, in and outside the ellipsoid (3) if $\frac{1}{a^2}+\frac{1}{c^2}=\frac{3}{b^2}$, $<\frac{3}{b^2}$, and $>\frac{3}{b^2}$, respectively.
\item If $a^2+c^2\le 2b^2$, then Astroida 6 and Ellipse 9 intersect at $(0,\pm y_7, \pm z_7)$ and 
$(0,\pm y_7, \mp z_7)$, where $$y_7=\sqrt{\frac{(b^2-a^2)^3(2c^2-b^2-a^2)^3}{b^2(c^2-b^2)(b^2+c^2-2a^2)^3}},$$
$$z_7=\sqrt{\frac{(c^2-a^2)^3(2b^2-c^2-a^2)^3}{c^2(b^2-c^2)(b^2+c^2-2a^2)^3}}.$$
These points are on, in and outside the ellipsoid (3) if $2 b^4+2 c^4-a^2 b^2-a^2 c^2-2 b^2 c^2=0$, $<0$, and $>0$, respectively.
\end{enumerate}
\end{lemma}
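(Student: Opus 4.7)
The plan is to exploit the evident symmetry in the statement: the nine assertions split into three natural groups of three, with the three cases inside each group related by a cyclic relabeling of $(a,b,c)$ together with the corresponding permutation of coordinate axes. So within each group it suffices to treat one representative case; the remaining two then follow by this renaming. The groups are (i) Ellipse $\cap$ Astroida in a common coordinate plane (parts 1--3); (ii) Ellipse $\cap$ Ellipse in a common coordinate plane (parts 4--6); and (iii) Astroida $\cap$ Ellipse, where the curves come from the two different caustics, again in a common coordinate plane (parts 7--9).

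For group (i), my approach is to recycle the calculation that preceded Theorem 1. Substituting the parametrization of Astroida 4 into the equation of Ellipse 1 and simplifying produces a sextic in $\cos^2 t$ whose real roots are exactly the stated $x_0, y_0$, and the condition $a^2 \ge 2b^2$ is precisely what forces these roots to be real. Parts 2 and 3 then follow immediately by renaming $(a,b) \mapsto (a,c)$ and $(a,b) \mapsto (b,c)$, respectively.

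For group (ii), each pair of ellipses lying in one coordinate plane gives two equations that are linear in the unknowns $u = x^2$ and $v = y^2$. I will solve the resulting $2 \times 2$ linear system for $u$ and $v$ as rational functions of $a,b,c$, and then read off the hypotheses on $a, b, c$ from the requirement that $u, v \ge 0$. In part 4 these inequalities are simultaneously unsatisfiable, which accounts for the absence of real intersections. The ordering claims in parts 5 and 6 (such as $x_1 \ge x_3$ and $z_1 \le z_3$) will be handled by factoring the differences $x_1^2 - x_3^2$, $z_1^2 - z_3^2$, etc., and identifying the sign of the resulting factor with the given symmetric condition on $a, b, c$.

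For group (iii), I once more parametrize the astroida and substitute into the ellipse equation, obtaining a trigonometric polynomial that factors to yield the stated points; parts 7 and 9 are transversal intersections and the hypotheses $a^2 + c^2 \ge 2b^2$ and $a^2 + c^2 \le 2b^2$ emerge naturally from requiring the squared coordinates to be nonnegative. The principal obstacle is part 8, where Astroida 5 and Ellipse 8 meet tangentially rather than transversally, so the relevant polynomial must have a double root. I plan to verify tangency by computing the tangent directions of each curve at $(x_6, 0, z_6)$ from their respective parametrizations and checking that they agree; equivalently, the resultant of the two defining polynomials must exhibit a square factor at the corresponding parameter value. The claim that these tangency points split each curve into arcs belonging to different caustics will be established by tracking the curvature parameter $t$ from (5) along each curve and locating the sign change at the tangency. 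Finally, the on/in/outside-the-ellipsoid trichotomies in parts 8 and 9 are obtained by substituting the explicit coordinates into the left-hand side of (3) and factoring the difference from $1$; the expected sign-controlling factors are $\tfrac{1}{a^2} + \tfrac{1}{c^2} - \tfrac{3}{b^2}$ in part 8 and $2b^4 + 2c^4 - a^2 b^2 - a^2 c^2 - 2b^2 c^2$ in part 9. The tangency verification in part 8, together with these final algebraic identities, are the most computationally demanding steps, and in practice a computer algebra system will be needed to carry them through cleanly.
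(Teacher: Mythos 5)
Your proposal is sound and is built from essentially the same computations as the paper's proof, which consists of a single terse paragraph: verify the stated coordinates by direct substitution into the two curve equations, prove the inequalities by direct substitution, compare slopes for the tangency in part 8, and substitute into (3) for the on/in/outside claims of parts 8 and 9. Where you genuinely differ is that you \emph{derive} rather than \emph{verify}: for the ellipse--ellipse pairs you solve the system, which is linear in $u=x^2$ and $v=y^2$ (or $z^2$), and for the astroida pairs you substitute the parametrization and solve the resulting equation (note: it is a cubic in $\cos^2 t$, i.e.\ a sextic in $\cos t$, not a ``sextic in $\cos^2 t$''). This buys you two things the paper's substitution argument does not deliver: completeness of the intersection lists, and, crucially, part 4 itself --- the assertion that Ellipse 1 and Ellipse 7 have \emph{no} real intersections cannot be proved by substituting the listed non-real points; one must solve and observe that the unique solution of the linear system has $x^{*2}>0$ but $y^{*2}<0$ under $a>b>c$, exactly as your plan does. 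Your parameter-tracking idea for the ``different caustics'' claim in part 8 likewise addresses a point on which the paper's proof is silent.

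The place to be careful is the symmetry reduction. The relabelings of $(a,b,c)$ do map each curve pair of a group onto another pair of the same group, and hence map intersection sets onto intersection sets; but they do not preserve the standing order $a>b>c$, so nothing qualitative transfers: positivity of the squared coordinates, hence the hypotheses, hence even the type of intersection, must be re-examined per part under the true ordering. Concretely, part 8 is \emph{not} the $b\leftrightarrow c$ image of part 7: relabeling part 7 produces the transversal crossings of Astroida 5 with Ellipse 8 (which are real only when $a^2+c^2>2b^2$ and are not listed in the lemma at all), whereas the points $(\pm x_6,0,\pm z_6)$ of part 8 are tangency points existing for every admissible $a>b>c$, and their formal preimages under the relabeling would be non-real for the pair (Astroida 4, Ellipse 7). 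You do treat part 8 separately, so the plan survives, but the blanket claim that ``the remaining two then follow by this renaming'' should be weakened to: the renaming supplies candidate formulas, while the sign analysis (and, in group (iii), the tangency analysis) must still be carried out case by case.
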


\begin{proof} 
By direct substitution of the given coordinates in the equations of the curves, we can  prove the claim about the intersections. The inequalities involving coordinates are also proved by direct substitutions. For tangency of the curves in part 8, additionally, the slopes of the curves are compared. In parts 8 and 9, the last claim about the position with respect to the ellipsoid (3), follows directly from the substitution in (3) of the coordinates $x_6, z_6$ and $y_7, z_7$, respectively.
\end{proof}

One can experiment with these intersection points and the curves by moving the sliders in GeoGebra Activity \url{https://www.geogebra.org/3d/tqjwgxwg}.

\textbf{6. The number of normals for the points of an ellipsoid} As for an ellipse, if the point $A$ is on the ellipsoid (3), then one of the points $B_1, B_2,\ldots,B_6$ coincides with $A$. The description all possible cases for the regions on an ellipsoid, where $n(A)$ jumps from 2 to 4, or from 4 to 6, is not as trivial as in the planar case. It seems that the problem in this setting did not attract much attention and remains unstudied. In the remaining part of the paper we will highlight the 3 cases of intersections of the ellipsoid and its caustics (see Figure 8). In general, these intersections are some curves on the ellipsoid (3), and a simple parametrization for these curves is given at the end of this paper. Using the intersections of these curves with the coordinate planes, which we found in the previous section, one can categorize 3 possible cases: (i) none of the caustics intersect the ellipsoid, (ii) only one of the caustics intersects the ellipsoid, (iii) both of the caustics intersect the ellipsoid. These intersections have many different shapes and positions, and the complete categorization of all general cases is shown in Figure 11.
\begin{theorem} For the ellipsoid (3) and its caustics defined by (4) and (5), the following cases are possible:
\begin{enumerate}
\item If $a^2<2c^2$ then there are no intersections of the caustics with the ellipsoid (3),

\item If $ b^2<2c^2\le a^2$ then only one of the caustics intersects the ellipsoid (3),

\item If $b^2\ge 2c^2$, then both of the caustics intersects the ellipsoid (3).
\end{enumerate}
In all cases, for the points of the ellipsoid (3) lying outside of the two caustics $n(A)=2$, for the points of the ellipsoid (3) lying in only one of these caustics $n(A)=4$, for the points of the ellipsoid (3) lying in both of these caustics $n(A)=6$, and for the intersection points of the ellipsoid (3) and these caustics $n(A)=3$ or $5$, except some of the points of the ellipsoid (3), where the caustics intersect each other or these caustics intersect the coordinate planes.
\end{theorem}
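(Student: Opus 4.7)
The plan is to split the proof into two largely independent parts: a classification of when (and how many of) the caustic sheets actually touch the ellipsoid~(3), and then a verbatim transfer of the spatial normal count from Section~4 onto the ellipsoid.

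For the classification, I would first exploit the reflective symmetry of both the ellipsoid and each caustic sheet under every coordinate reflection: the intersection locus is a compact algebraic set invariant under the group generated by the three reflections. As the parameters $(a,b,c)$ are deformed continuously, any new connected component of this intersection must be born either at a tangency on a coordinate plane or by the merging of two previously disjoint mirror-image components, again on a coordinate plane. Next I would invoke Lemma~2 to read off when the coordinate-plane traces of the two surfaces actually intersect: part~1 gives intersections on $z=0$ iff $a^2\ge 2b^2$, part~2 gives intersections on $y=0$ iff $a^2\ge 2c^2$, and part~3 gives intersections on $x=0$ iff $b^2\ge 2c^2$. Because the standing assumption $a>b>c>0$ forces both $a^2\ge 2b^2\Rightarrow a^2\ge 2c^2$ and $b^2\ge 2c^2\Rightarrow a^2\ge 2c^2$, these three thresholds reorganize themselves into exactly the three cases of the statement: in Case~1 none of them holds, so no intersection exists; in Case~2 only the $y=0$ threshold (and possibly the $z=0$ one) is active, and by the assignment of arcs in Lemma~2(8) the resulting intersection points all belong to a single caustic sheet; in Case~3 both the $x=0$ and $y=0$ thresholds fire and the same assignment engages both sheets.

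Once the classification is established, I would close out the theorem by noting that a point $A$ on the ellipsoid automatically supports one of its own normals, so the values $n(A)=2,4,6$ (outside both, inside exactly one, or inside both caustic sheets), $n(A)=3$ or $5$ on a single caustic, and the lower exceptional values at the nodal (purse) curve of the two caustics or on the coordinate planes $X=0$, $Y=0$, $Z=0$, all transfer verbatim from the spatial statement in Section~4 to the ellipsoid.

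The hard part will be the assertion, in Case~1 especially, that no component of the caustic-ellipsoid intersection survives off the coordinate planes, since the soft symmetry/continuity argument does not on its own rule out symmetric pairs of generic isolated birth points far from any plane of symmetry. The cleanest remedy is to appeal to the explicit parametrization of the ellipsoid-caustic intersection curves that is promised at the end of the paper, which turns the trichotomy into a purely algebraic existence question. As an independent sanity check, the chord-versus-radius test at the vertex $(0,0,c)$---where the inward-normal chord of the ellipsoid has length $2c$ and the larger principal radius of curvature there is $a^2/c$---recovers the critical threshold $a^2=2c^2$ exactly: the caustic starts to exit the ellipsoid near $(0,0,c)$ precisely when $a^2/c>2c$, confirming that Case~1 really is the regime in which no inward-normal chord is long enough to reach its own centre of curvature on the far side.
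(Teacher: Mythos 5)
Your proposal correctly identifies the two structural pieces of the proof --- (i) reduce the existence question to the coordinate-plane traces and then read off the trichotomy from the lemmas on those traces, (ii) quote Joachimsthal for the values $n(A)=2,3,4,5,6$ --- but the central step of piece (i) is missing, and you say so yourself. The reduction \enquote{ellipsoid and caustic intersect $\Rightarrow$ they intersect somewhere on a coordinate plane} is the whole content of Case 1, and your symmetry-plus-deformation argument does not prove it: under the reflection group a new intersection component can perfectly well be born at a tangency at a generic point of an open octant, producing an orbit of eight components disjoint from every coordinate plane. Neither of your two patches closes this hole. The explicit parametrization of the intersection curves is only invoked, never analyzed; deciding for which $t$ all three of $(x(t))^2,(y(t))^2,(z(t))^2$ are simultaneously nonnegative is precisely the algebraic work that would constitute the proof, and it is not done. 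The chord test at $(0,0,c)$ is a local, necessary-only check: it shows that the centre of curvature corresponding to the vertex leaves the ellipsoid exactly when $a^2>2c^2$, but it cannot exclude intersection components located elsewhere when $a^2<2c^2$. So the proposal is a plan whose key step is acknowledged but unfilled.

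The paper closes exactly this gap with a short geometric argument based on the envelope property of caustics. Suppose an intersection curve $\Gamma$ lay entirely inside an open octant; then the piece of the caustic bounded by $\Gamma$ would be a compact smooth surface lying outside the ellipsoid, hence it would admit a tangent plane disjoint from the ellipsoid. But every tangent plane of the caustic contains a normal line of the ellipsoid, because the principal radii are tangent to the caustics (the paper cites Kagan for this), and every normal line meets the ellipsoid --- a contradiction. With this reduction established, the coordinate-plane analysis of Lemmas 2 and 3 yields the three cases, and the counts follow from Joachimsthal, as in your outline. A secondary inaccuracy in your case bookkeeping: the caustic traces on the coordinate planes are not only the astroidas but also Ellipses 7, 8, 9 of Lemma 2, so for example with $(a,b,c)=(5,4,3)$ (your Case 2, where $2b^2\ge a^2\ge 2c^2$) Ellipse 3 and Ellipse 9 do intersect on the plane $x=0$ by Lemma 3(6), even though $b^2<2c^2$; verifying that all such trace points still lie on a \emph{single} caustic sheet requires the arc assignments coming from the tangency statement in Lemma 3(8), which your outline invokes only in passing and at the wrong place.
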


\begin{proof} 
Let us first note that if the ellipsoid (3) and one of its caustics intersect, then they should also intersect on at least one point of the coordinate planes. Indeed, suppose on the contrary that the ellipsoid (3) and one of its caustics intersect but they do not intersect on any of the coordinate planes. Then one of the intersection curves should be situated completely in the 1st octant. Denote this curve by $\Gamma $. The part of the caustic bounded by $\Gamma $ is a smooth surface outside of the ellipsoid (3), and therefore it has a tangent plane which does not intersect the ellipsoid (3). This tangent plane contains also the corresponding normal of the ellipsoid (3) because the principal radii are tangent to the caustics (see p. 312, \cite{kagan}). This is a contradiction because the normals intersects the ellipsoid (3). It follows that the problem of existence of intersections of the ellipsoid (3) and its caustics can be studied just by their cross sections with the coordinate planes, which was done in Lemma 2 and Lemma 3. The remaining claims follow directly from the results in \cite{joach}.
\end{proof}

Depending on whether $a^2\le 2b^2$ or $a^2> 2b^2$, the caustic corresponding to the greater principal radius (the red caustics in Figure 11) is encompassed by the ellipsoid (3) or the ellipsoid (3) is encompassed by this caustic. Similarly, depending on whether $a^2+c^2\ge 2b^2$ or $a^2+c^2<2b^2$, the (blue) caustic corresponding to the smaller principal radius is encompassed by the (red) caustic corresponding to the greater principal radius or vice versa (cf. p. 326 and p. 363, \cite{cayley}). It is obvious that if $a^2+c^2\le 2b^2$ then $a^2<2b^2$. Similarly, if $b^2<2c^2\le a^2$ then $\frac{1}{a^2}+\frac{1}{c^2}<\frac{3}{b^2}$.

Detailed classification of the cases of intersection of the ellipsoid and its caustics is done in Figure 11 based on the sign of the expressions $\frac{1}{a^2}+\frac{1}{c^2}-\frac{3}{b^2}$ and $2 b^4+2 c^4-a^2 b^2-a^2 c^2-2 b^2 c^2$. Note that if $b^2\ge 2c^2$ and $2 b^4+2 c^4-a^2 b^2-a^2 c^2-2 b^2 c^2\ge 0$ then $\frac{1}{a^2}+\frac{1}{c^2}>\frac{3}{b^2}$. Indeed, since $\frac{1}{a^2}+\frac{1}{c^2}>\frac{3}{b^2}$ can be written as $b^2c^2+a^2b^2-3a^2c^2>0$, it is sufficient to show that
$$
b^2c^2+a^2b^2-3a^2c^2>2 b^4+2 c^4-a^2 b^2-a^2 c^2-2 b^2 c^2.
$$
This inequality can be written as $3b^2c^2+2a^2(b^2-c^2)>2 b^4+2 c^4$. Since
$a>b>c$, it is sufficient to show that $3b^2c^2+2b^2(b^2-c^2)\ge2 b^4+2 c^4$, which simplifies to $b^2\ge 2c^2$. Similarly, if $2 b^4+2 c^4-a^2 b^2-a^2 c^2-2 b^2 c^2\ge 0$ then $a^2+c^2<2b^2$. Indeed, by rewriting the given inequality we obtain $a^2+c^2\le 2b^2-\frac{c^2(a^2+b^2-2c^2)}{b^2}<2b^2$. 
\begin{theorem}
If $2 b^4+2 c^4-a^2 b^2-a^2 c^2-2 b^2 c^2\le 0$ and $\frac{1}{a^2}+\frac{1}{c^2}\ge\frac{3}{b^2}$ then the two caustics and the ellipse (3) intersect at a unique point of each octant.
\end{theorem}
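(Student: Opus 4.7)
The plan is to analyze the nodal curve $\Gamma$, namely the intersection of the two caustics (the grey curve in Figure 10, called "the purse"), and show that $\Gamma$ crosses the ellipsoid (3) exactly once in each octant. Since the caustic equations (4)--(5) and the ellipsoid are all invariant under $(x,y,z)\mapsto(\pm x,\pm y,\pm z)$, it suffices to treat the first octant and then reflect.

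First, I would locate the endpoints of the first-octant piece of $\Gamma$ on the bounding coordinate planes. By Lemma 2, the traces of the two caustics on these planes are Astroida 5 / Ellipse 8 on $y=0$, Astroida 6 / Ellipse 9 on $x=0$, and Astroida 4 / Ellipse 7 on $z=0$. By Lemma 3(8) the $y=0$ meeting is at $(x_6,0,z_6)$, and the hypothesis $\frac{1}{a^2}+\frac{1}{c^2}\ge\frac{3}{b^2}$ puts this point on or outside the ellipsoid. By Lemma 3(9) the $x=0$ meeting is at $(0,y_7,z_7)$, and the hypothesis $2b^4+2c^4-a^2b^2-a^2c^2-2b^2c^2\le 0$ puts this point on or inside the ellipsoid. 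At the two endpoints of the relevant arc of $\Gamma$, the continuous function $f(x,y,z) = \frac{x^2}{a^2}+\frac{y^2}{b^2}+\frac{z^2}{c^2}-1$ therefore takes values of opposite sign, so the intermediate value theorem along $\Gamma$ produces at least one intersection with the ellipsoid in the first octant.

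For uniqueness, I would use the parametrization of the caustic-intersection curves mentioned at the end of the paper. Solving (4) and (5) jointly should exhibit $\Gamma$ as a one-parameter family in the cubic-hyperbola parameter $t$, and the goal is then to establish that $f$ restricted to this parametrization is strictly monotonic along the first-octant arc of $\Gamma$. Combined with existence, this delivers a single intersection per octant, and symmetry extends the conclusion to all eight. The boundary equalities $\frac{1}{a^2}+\frac{1}{c^2}=\frac{3}{b^2}$ and $2b^4+2c^4-a^2b^2-a^2c^2-2b^2c^2=0$ then correspond precisely to the unique intersection sliding onto a coordinate plane, which is exactly what is needed for the statement to remain true in those limiting cases.

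The main obstacle will be the monotonicity step: eliminating $t$ algebraically from (4) and (5) regenerates the 12th-order equation of the caustics, so a direct comparison is unwieldy. A more promising approach is geometric, since each point of $\Gamma$ is simultaneously the center of principal curvature for two distinct base points $B_1, B_2$ on the ellipsoid; one can then attempt to compute $\frac{d}{dt}f$ along $\Gamma$ through the motion of $(B_1,B_2)$, using the fact, already invoked in the proof of Theorem 4, that the principal radii are tangent to the caustics. A secondary subtlety is that Lemma 3(7) could introduce a third endpoint of $\Gamma$ on $z=0$ when $a^2+c^2\ge 2b^2$; before claiming a single first-octant arc, one must verify that any such additional component does not produce a second crossing with the ellipsoid.
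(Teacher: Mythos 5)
Your existence step is in the right spirit (it is essentially an intermediate value argument along the nodal curve, which is also what drives the paper's endpoint sign checks), but it has a concrete case gap. You take the two endpoints of the first-octant arc of $\Gamma$ from Lemma 3(8) (on $y=0$) and Lemma 3(9) (on $x=0$); however, the Lemma 3(9) points are real only when $a^2+c^2\le 2b^2$, and the theorem's hypotheses do not force this. For example $(a,b,c)=(5,3.7,2)$ (case (xi) of Figure 11) satisfies $2b^4+2c^4-a^2b^2-a^2c^2-2b^2c^2<0$ and $\frac{1}{a^2}+\frac{1}{c^2}>\frac{3}{b^2}$, yet $a^2+c^2>2b^2$. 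In that situation the arc does not end on $x=0$ at all: its second endpoint lies on $z=0$, at the Lemma 3(7) point $(x_5,y_5,0)$. This is not a ``third endpoint'' or an extra component to be ruled out, as you describe it; it is the second endpoint of the very arc you are using, and Lemma 3 makes no statement about its position relative to the ellipsoid, so your IVT setup collapses exactly there. (In the paper's parametrization this endpoint is $t=\frac{\alpha}{\alpha-\beta}$, and the needed sign, $p\left(\frac{\alpha}{\alpha-\beta}\right)<0$, holds unconditionally because $2a^4+2b^4-a^2c^2-b^2c^2-2a^2b^2>0$ whenever $a>b>c$.)

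The more serious gap is uniqueness, which you leave as a strategy rather than a proof, and the strategy is the wrong target: strict monotonicity of $f$ along $\Gamma$ is stronger than what is needed, and there is no reason to expect the tangency of principal radii to the caustics to yield it. What the paper actually does is algebraic: it substitutes Cayley's explicit parametrization of the nodal curve (p. 351 of \cite{cayley}, with $\alpha=b^2-c^2$, $\beta=c^2-a^2$, $\gamma=a^2-b^2$ and $0\le t\le \min\left(\frac{\alpha}{\alpha-\beta},-\frac{\gamma}{\beta-\gamma}\right)$) into the ellipsoid equation and simplifies to
$$f(t)=\frac{p(t)\,(q(t))^2}{a^4b^4c^4\left(3t-2\right)^2 r(t)},$$
where $p$ and $q$ are explicit quadratics and $r$ is linear and positive for $t\ge 0$. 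Uniqueness then reduces to elementary root counting of quadratics: endpoint sign evaluations show that $q$ has no root in the parameter interval and that $p$ has exactly one (its second root lying outside the interval), with the hypotheses entering precisely as the signs $p(0)\ge 0$ (equivalent to $\frac{1}{a^2}+\frac{1}{c^2}\ge\frac{3}{b^2}$) and, in the case $a^2+c^2<2b^2$, $p\left(-\frac{\gamma}{\beta-\gamma}\right)\le 0$ (equivalent to $2b^4+2c^4-a^2b^2-a^2c^2-2b^2c^2\le 0$). Without this factorization, or some substitute that genuinely bounds the number of zeros of $f$ on the arc, your proposal yields at least one intersection per octant (and only in the $a^2+c^2\le 2b^2$ case), not exactly one.
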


\begin{proof} 
The intersection curve of the two caustics (see Figure 10) is called \textit{nodal curve} and its parametrization was given in p. 351, \cite{cayley}:
$$
(x(t))^2=\frac{((\gamma-\alpha)t+\alpha) ((\gamma-\alpha)t-2\gamma)^2 ((\beta-\gamma)t+\gamma)^3}{-\beta\gamma a^2(\alpha \gamma+\Omega t)(3 t-2)^2},
(y(t))^2=\frac{t^2(t-1)((\gamma-\alpha)^2t+3\alpha\gamma)^3}{-\alpha\gamma b^2(\alpha \gamma+\Omega t)(3 t-2)^2},
$$
$$
(z(t))^2=\frac{((\gamma-\alpha)t-\gamma) ((\gamma-\alpha)t+2\alpha)^2 ((\alpha-\beta)t-\alpha)^3}{-\alpha\beta c^2(\alpha \gamma+\Omega t)(3 t-2)^2},
$$
where $\alpha=b^2-c^2$, $\beta=c^2-a^2$, $\gamma=a^2-b^2$, $\Omega=\alpha^2-\beta \gamma$, and $0\le t\le \min\left(\frac{\alpha}{\alpha-\beta},-\frac{\gamma}{\beta-\gamma}\right)$. Note that $-\frac{\gamma}{\beta-\gamma}\ge\frac{\alpha}{\alpha-\beta}$ iff $a^2+c^2\ge 2b^2$. Also note that $\max\left(\frac{\alpha}{\alpha-\beta},-\frac{\gamma}{\beta-\gamma}\right)<\frac{2}{3}$. Consider the function $f(t)=\frac{x(t)^2}{a^2} + \frac{y(t)^2}{b^2} + \frac{z(t)^2}{c^2}-1$. Simplifying using Maple we obtain that $f(t)=\frac{p(t)(q(t))^2}{a^4b^4c^4\left(3t-2\right)^2r(t)}$, where
$$
p(t)=(a^4b^2 + a^4c^2 + a^2b^4  + c^4a^2 + b^4c^2 +c^4 b^2- 6a^2b^2c^2)(a^2 - 2b^2 + c^2)t^2 
$$
$$
+(11a^4b^2c^2  + 11a^2b^2c^4  + 3a^2b^6 + 3b^6c^2  -b^2a^6 - c^2a^6 - a^2c^6 - b^2c^6- a^4b^4 - 5a^4c^4- b^4c^4- 17a^2b^4c^2)t
$$
$$
+ (a^2 - b^2)(b ^2- c^2)(a^2b^2 + b^2c^2 - 3a^2c^2),
$$
$$
q(t)=(a^2 + b^2 + c^2)(a^2 - 2b^2 + c^2)t^2 + (4b^4 + a^2b^2 + b^2c^2 - 3a^2c^2)t - 2b^4,
$$
$$
r(t)=(a^4 +b^4 + c^4-a^2b^2 - b^2c^2 - a^2c^2 )t + (a^2-b^2)(b^2-c^2).
$$
Note that $r(t)>0$ whenever $t\ge0$. We will consider 3 cases.

Case 1. Suppose that $a^2 - 2b^2 + c^2>0$. Note that
$$
p(0)=(a^2 - b^2)(b^2 - c^2)(a^2b^2 + b^2c^2 - 3a^2c^2)>0,
$$
$$
p\left(\frac{\alpha}{\alpha-\beta}\right)=-\frac{c^2(a^2 - c^2)(b^2 - c^2)(2a^2 - b^2- c^2)( 2a^4+ 2b^4-a^2c^2 - b^2c^2  - 2a^2b^2 )}{(a^2 + b^2 - 2c^2)^2}<0
$$
$$
p\left(1\right)=-b^4(a^2 - c^2)^2<0, \ p(+\infty)=+\infty.
$$
Therefore, one of the roots of $p(t)$ is in the interval $\left(0,\frac{\alpha}{\alpha-\beta}\right)$, and the other root is in the interval $(1,+\infty)$.
Similarly,
$$
q(-\infty)=+\infty,\ q(0)=-2b^4<0,
$$
$$
q\left(\frac{\alpha}{\alpha-\beta}\right)=-\frac{c^2(3a^2(b^2-c^2)+c^2(a^2+c^2 - 2b^2))(2a^2 - b^2 - c^2)}{(a^2 + b^2 - 2c^2)^2}<0, \ q(+\infty)=+\infty,
$$
Therefore, one of the roots of $q(t)$ is in the interval $(-\infty,0)$, and the other root is in the interval $\left(\frac{\alpha}{\alpha-\beta},+\infty\right)$.

Case 2. Now suppose that $a^2 - 2b^2 + c^2<0$.
$$
p(-\infty)=-\infty,\ p(0)>0, 
$$
$$
p\left(-\frac{\gamma}{\beta-\gamma}\right)=\frac{a^2(a^2 - c^2)(a^2 - b^2)(a^2 + b^2- 2c^2)( 2b^4+ 2c^4-a^2b^2 - a^2c^2  - 2b^2c^2 )}{(2a^2 -b^2 - c^2)^2}<0.
$$
Therefore, one of the roots of $p(t)$ is in the interval $(-\infty,0)$, and the other root is in the interval $\left(0,-\frac{\gamma}{\beta-\gamma}\right)$.
Similarly,
$$
q\left(-\frac{\gamma}{\beta-\gamma}\right)=\frac{a^2(3c^2(b^2-a^2)+a^2(a^2+c^2-2b^2))(a^2 + b^2 - 2c^2)}{(2a^2 - b^2 - c^2)^2}<0,
$$
$$
q(1)=a^4 - a^2c^2 + c^4>0,\ q(+\infty)=-\infty.
$$
Therefore, one of the roots of $q(t)$ is in the interval $\left(-\frac{\gamma}{\beta-\gamma},1\right)$, and the other root is in the interval $(1,+\infty)$.

Case 3. Let us now suppose that $a^2 - 2b^2 + c^2=0$. In this case $p(t)$ and $q(t)$ are linear functions and $-\frac{\gamma}{\beta-\gamma}=\frac{\alpha}{\alpha-\beta}$. As in Case 1, $p(0)>0,$ $p\left(\frac{\alpha}{\alpha-\beta}\right)<0$, and therefore, the only root of $p(t)$ is in the interval $\left(0,\frac{\alpha}{\alpha-\beta}\right)$. As in Case 2, $
q\left(-\frac{\gamma}{\beta-\gamma}\right)<0,$ $q(1)>0$, and therefore, the only root of $q(t)$ is in the interval $\left(-\frac{\gamma}{\beta-\gamma},1\right)$. 

We proved that in all the cases there is only one zero $t_0$ of $f(t)$ in the interval $0\le t\le \min\left(\frac{\alpha}{\alpha-\beta},-\frac{\gamma}{\beta-\gamma}\right)$ and it is one of the solutions of the quadratic equation $p(t)=0$. The $x,\ y,\ z$ coordinates of the intersection point of the two caustics and the ellipsoid (3) in the first quadrant are $x(t_0),\ y(t_0),\ z(t_0)$, respectively (See Figure 10, the intersection point of cyan blue curves).

\end{proof} 

\begin{figure}[htbp]
\centerline{(i)\includegraphics[scale=0.45]{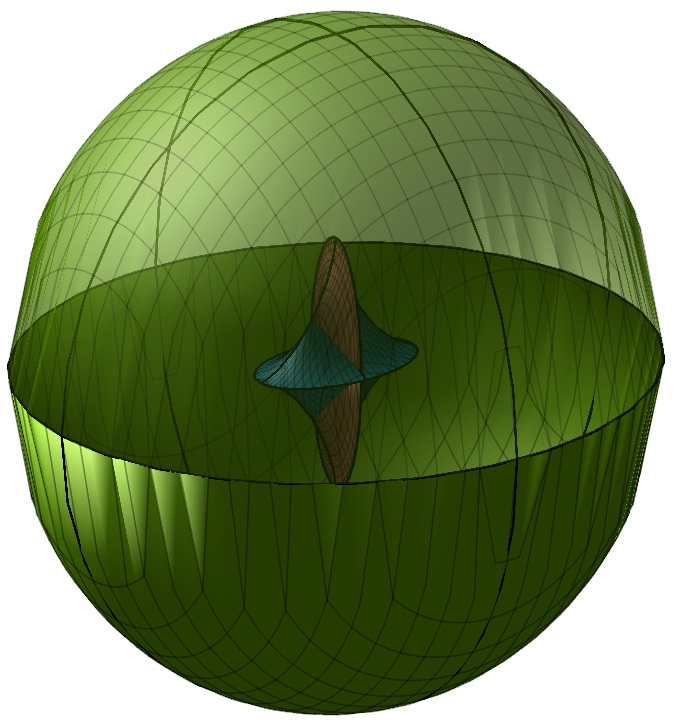} (ii)\includegraphics[scale=0.45]{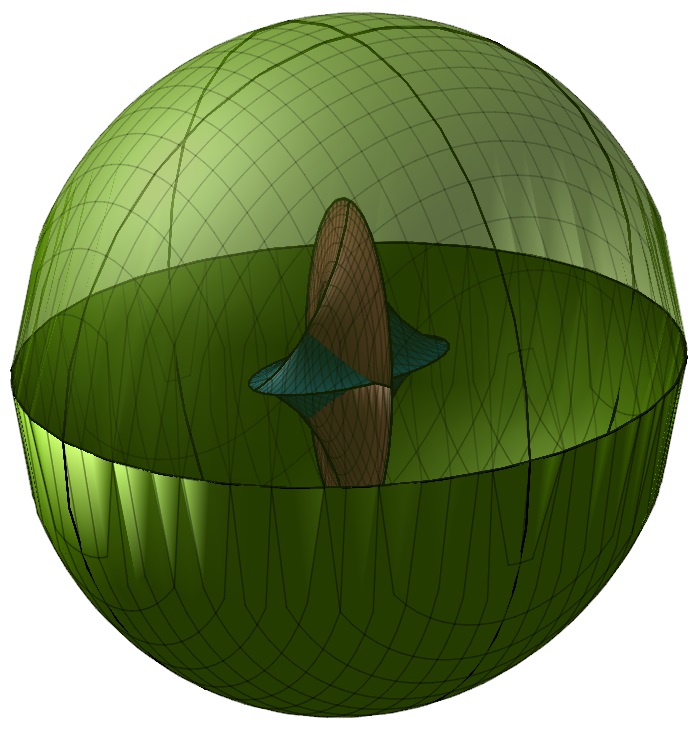} (iii)\includegraphics[scale=0.6]{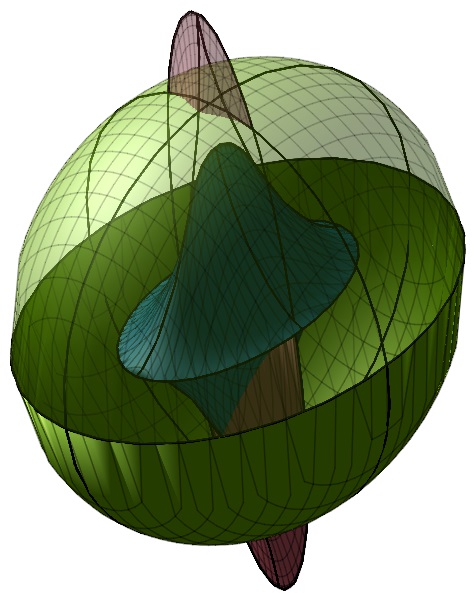} (iv)\includegraphics[scale=0.5]{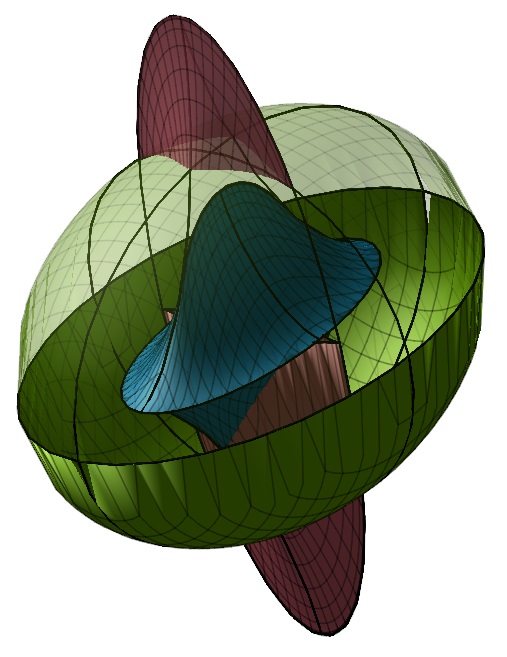}}
\centerline{(v)\includegraphics[scale=0.6]{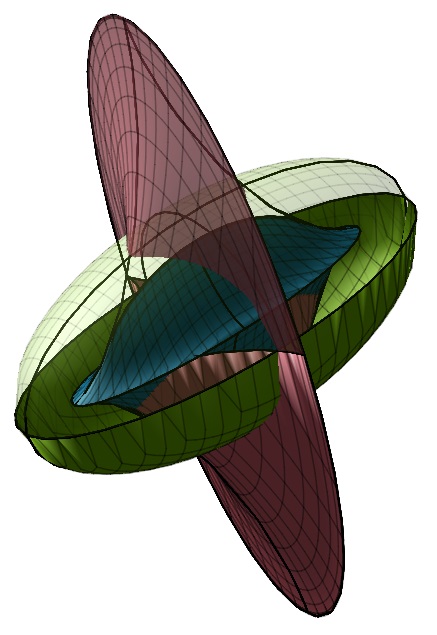}
 (vi)\includegraphics[scale=1]{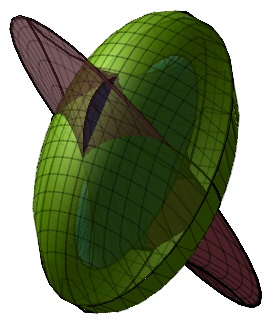} (vii)\includegraphics[scale=0.7]{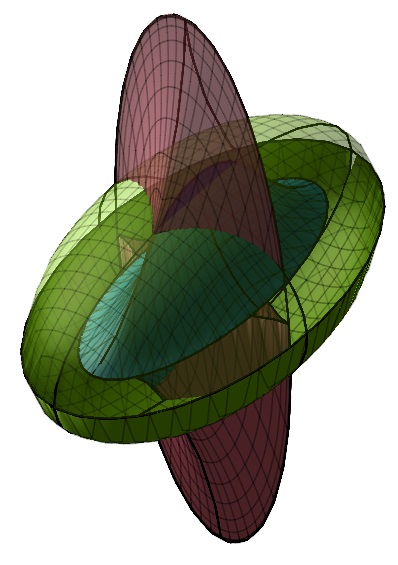} (viii)\includegraphics[scale=0.8]{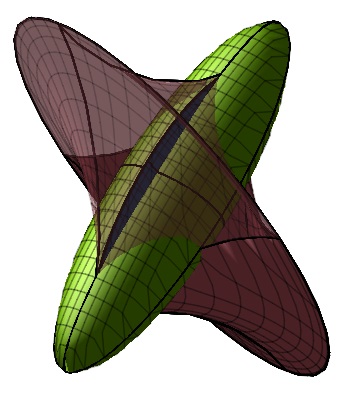}
} 
\centerline{(ix)\includegraphics[scale=0.9]{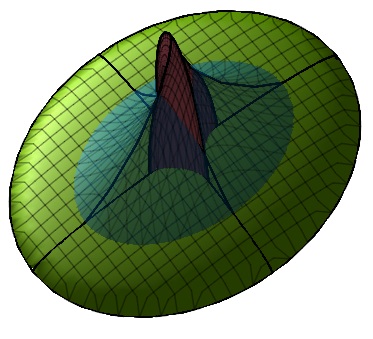} (x)\includegraphics[scale=1]{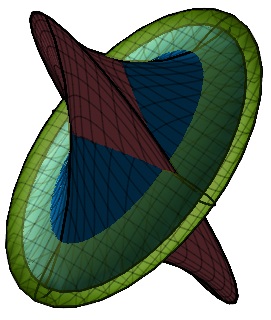} (xi)\includegraphics[scale=0.9]{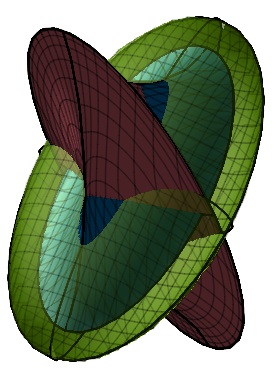}(xii)\includegraphics[scale=0.9]{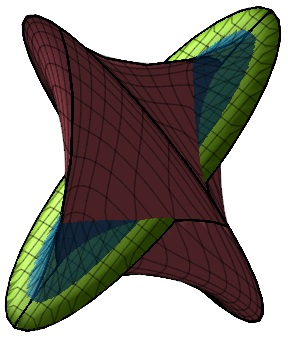}}\label{fig11}

Figure 11: Classification of intersections of ellipsoid (green) and its caustics (red and blue). Created using Maple 2022 for the following values of $(a,b,c)$:

(i) $(4.7,4.4,4)$ ($a^2<2c^2$, $a^2+c^2< 2b^2$).

(ii) $(4.9,4.4,4)$ ($a^2<2c^2$, $a^2+c^2> 2b^2$);

(iii) $(4.7,4,3)$ ($b^2<2c^2< a^2$, $a^2+c^2<2b^2$);

(iv) $(5,4,3)$ ($b^2<2c^2< a^2$, $a^2+c^2> 2b^2$, $a^2<2b^2$);

(v) $(5,3.3,2.5)$ ($b^2<2c^2< a^2$, $a^2> 2b^2$);

(vi) $(4,3,2)$ ($b^2> 2c^2$, $a^2+c^2<2b^2$, $\frac{1}{a^2}+\frac{1}{c^2}<\frac{3}{b^2}$);

(vii) $5, 3.7, 2.5$ ($b^2> 2c^2$, $a^2+c^2> 2b^2$, $a^2<2b^2$, $\frac{1}{a^2}+\frac{1}{c^2}<\frac{3}{b^2}$)

(viii) $(5,2.8,1.8)$ ($b^2> 2c^2$, $a^2> 2b^2$, $\frac{1}{a^2}+\frac{1}{c^2}<\frac{3}{b^2}$);

(ix) $(5,4.4,1.6)$ ($b^2> 2c^2$,  $2 b^4+2 c^4-a^2 b^2-a^2 c^2-2 b^2 c^2>0$); 

(x) $(4.5,3.5,1.4)$ ($b^2> 2c^2$, $a^2+c^2<2b^2$, $\frac{1}{a^2}+\frac{1}{c^2}>\frac{3}{b^2}$,  $2 b^4+2 c^4-a^2 b^2-a^2 c^2-2 b^2 c^2<0$);

(xi) $(5,3.7,2)$  ($b^2> 2c^2$, $a^2+c^2> 2b^2$, $a^2<2b^2$, $\frac{1}{a^2}+\frac{1}{c^2}>\frac{3}{b^2}$)

(xii) $(5,3,1)$ ($b^2> 2c^2$,  $a^2> 2b^2$, $\frac{1}{a^2}+\frac{1}{c^2}>\frac{3}{b^2}$);

\end{figure}

In the remaining part of the paper we will find a parametrization for the intersection curves of each of the caustics with the ellipsoid (3). In p. 324, \cite{cayley} (see also p. 218, \cite{salmon}) parametrization in curvilinear coordinates $\xi$, $\eta$ of the caustics is given as
$$ 
(x(\xi,\eta))^2=-\frac{(a^2+\xi)^3(a^2+\eta)}{a^2\beta\gamma},
(y(\xi,\eta))^2=-\frac{(b^2+\xi)^3(b^2+\eta)}{a^2\alpha\gamma},
(z(\xi,\eta))^2=-\frac{(c^2+\xi)^3(c^2+\eta)}{a^2\alpha\beta},
$$
where $-a^2\le\xi,\eta\le-c^2$. By substituting these equalities in (3), we obtain
$$
\eta(\xi)=-\frac{1+\frac{(a^2+\xi)^3}{\beta\gamma a^2}+\frac{(b^2+\xi)^3}{\alpha\gamma b^2}+\frac{(c^2+\xi)^3}{\alpha\beta c^2}}{\frac{(a^2+\xi)^3}{\beta\gamma a^4}+\frac{(b^2+\xi)^3}{\alpha\gamma b^4}+\frac{(c^2+\xi)^3}{\alpha\beta c^4}}.
$$
Substituting these equalities in the above parametrization of the caustics and taking $\xi =t$ gives
$$ 
(x(t))^2=\frac{a^2(a^2 + t)^3((b^2 + c^2)t+3b^2c^2)}{(a^2-c^2)(a^2-b^2)((a^2b^2+ a^2c^2 + b^2c^2)t+3a^2b^2c^2)},
$$
$$
(y(t))^2=\frac{b^2(b^2 + t)^3((a^2 + c^2)t+3a^2c^2)}{(b^2-a^2)(b^2-c^2)((a^2b^2+ a^2c^2 + b^2c^2)t+3a^2b^2c^2)},
$$
$$
(z(t))^2=\frac{c^2(c^2 + t)^3( (a^2 + b^2)t+3a^2b^2)}{(c^2-a^2)(c^2-b^2)((a^2b^2+ a^2c^2 + b^2c^2)t+3a^2b^2c^2)},
$$
where $-a^2\le t\le-b^2$ and $-b^2\le t\le-c^2$ correspond to the two intersections of the caustics with the ellipsoid (3) (See Figure 10, cyan blue curves). This parametrization determines the curves on the ellipsoid (3) which separate the regions of the ellipsoid with different values for $n(A)$, thus completing the solution of Apollonius problem for triaxial ellipsoid.

\textbf{7. Conclusion} In the paper Apollonius problems for 2 dimensions (ellipse) and 3 dimensions (ellipsoid) were discussed. The number of concurrent normals of an ellipse (an ellipsoid) is dependent on the position of the point of concurrency with respect to caustics of the ellipse (the ellipsoid). The cases when the point of concurrency is on the ellipse (the ellipsoid), required the study of several different cases of intersections of the caustics with the given ellipse (ellipsoid). It would be interesting to generalize the results to 4 (see \cite{joet}) and higher dimensions.

\textbf{8. Appendix} The tangent lines of an ellipse, and the tangent lines and planes of an ellipsoid are much easier to study than the normals. For completeness, the problem on the number of concurrent tangent lines (planes) of an ellipse (ellipsoid) will be discussed here. Let us take point $A(X,Y)$ outside of the ellipse (1) and find point $B(x_1,y_1)$ and $C(x_2,y_2)$ on the ellipse (1) such that $AB$ and $AC$ are tangent lines of the ellipse (1) at $B$ and $C$, respectively. Since $AB$ and $AC$ have the same slopes as the ellipse (1) at the point $B$ and $C$, respectively, we have $\frac{y-Y}{x-X}=-\frac{b^2x}{a^2y}$, which can be written as $\frac{x(x-X)}{a^2}+\frac{y(y-Y)}{b^2}=0$. This is equation of ellipse through the points $O$ and $A$, with center at $\left(\frac{X}{2},\frac{Y}{2}\right)$ and semiaxes parallel to the semiaxes of the original ellipse. Its intersections with the original ellipse (1) are on the line $\frac{xX}{a^2}+\frac{yY}{b^2}=1$, which is obtained from subtraction of the equations of the ellipses. The coordinates of the points $B$ and $C$ are then determined by
$$
x_1=a\cdot \frac{\frac{X}{a}-\frac{Y}{b}\sqrt{\left(\frac{X}{a}\right)^2+\left(\frac{Y}{b}\right)^2-1}}{\left(\frac{X}{a}\right)^2+\left(\frac{Y}{b}\right)^2},\ y_1=b\cdot \frac{\frac{Y}{b}+\frac{X}{a}\sqrt{\left(\frac{X}{a}\right)^2+\left(\frac{Y}{b}\right)^2-1}}{\left(\frac{X}{a}\right)^2+\left(\frac{Y}{b}\right)^2},$$
$$x_2=a\cdot \frac{\frac{X}{a}+\frac{Y}{b}\sqrt{\left(\frac{X}{a}\right)^2+\left(\frac{Y}{b}\right)^2-1}}{\left(\frac{X}{a}\right)^2+\left(\frac{Y}{b}\right)^2},\ y_2=b\cdot \frac{\frac{Y}{b}-\frac{X}{a}\sqrt{\left(\frac{X}{a}\right)^2+\left(\frac{Y}{b}\right)^2-1}}{\left(\frac{X}{a}\right)^2+\left(\frac{Y}{b}\right)^2}.$$

\begin{figure}[htbp]
\centerline{\includegraphics[scale=.2]{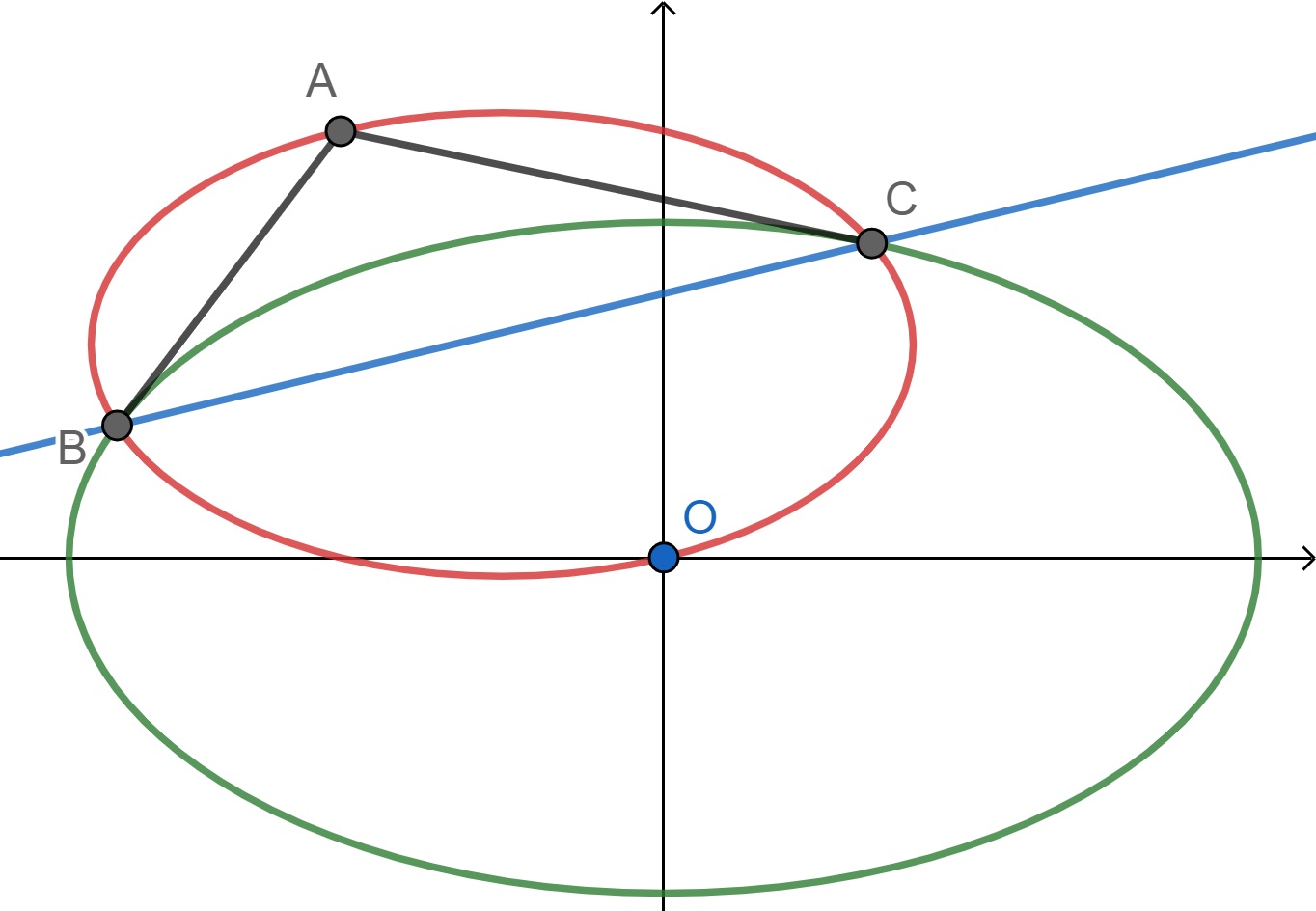}}
\label{fig12}
Figure 12: Ellipse $\frac{x^2}{a^2}+\frac{y^2}{b^2}=1$ (green), its 2 tangents (black), line $\frac{xX}{a^2}+\frac{yY}{b^2}=1$ (blue), and ellipse $\frac{x(x-X)}{a^2}+\frac{y(y-Y)}{b^2}=0$ (red).
\end{figure}
If we denote by $t(A)$ the total number of tangent lines of the ellipse passing through $A$, then $t(A)=2$ outside of the ellipse, $t(A)=0$ inside of the ellipse, and $t(A)=1$ on the ellipse.

Similarly, for the ellipsoid (3), if $A$ is outside of the ellipsoid (3), then the points $B$, such that $AB$ is tangent to the ellipsoid (3) can be determined by intersecting (3) with another ellipsoid $\frac{x(x-X)}{a^2}+\frac{y(y-Y)}{b^2}+\frac{z(z-Z)}{c^2}=0$. All these intersection points are on the plane $\frac{xX}{a^2}+\frac{yY}{b^2}+\frac{zZ}{c^2}=1$. The number of tangent lines $tl(A)$ and tangent planes $tp(A)$ of the ellipsoid (3), which pass through $A$ is infinity ($tl(A)=tp(A)=\infty$) for exterior points of the ellipsoid (3), $tl(A)=tp(A)=0$ for interior points, and $tl(A)=\infty$ and $tp(A)=1$ for the points of the ellipsoid (3) itself.

\begin{figure}[htbp]
\centerline{\includegraphics[scale=.4]{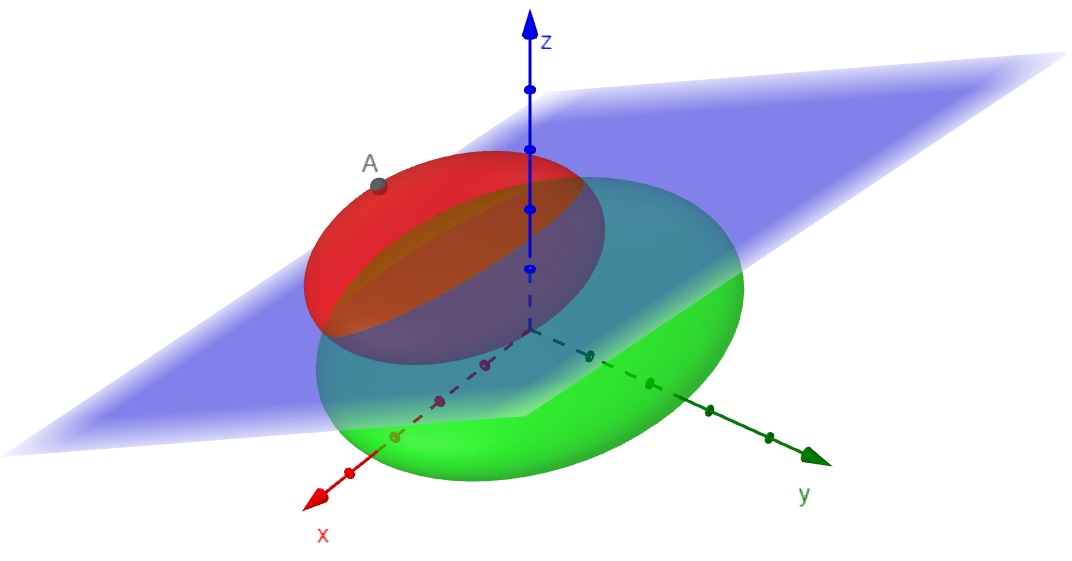}}
\label{fig13}
Figure 13: Ellipsoid $\frac{x^2}{a^2}+\frac{y^2}{b^2}+\frac{z^2}{c^2}=1$ (green), plane $\frac{xX}{a^2}+\frac{yY}{b^2}+\frac{zZ}{c^2}=1$ (blue), and ellipsoid $\frac{x(x-X)}{a^2}+\frac{y(y-Y)}{b^2}+\frac{z(z-Z)}{c^2}=0$ (red).
\end{figure}

\begin{figure}[htbp]
\centerline{\includegraphics[scale=.4]{picAp2.jpg}}
\label{fig13}
Figure 13: Ellipsoid $\frac{x^2}{a^2}+\frac{y^2}{b^2}+\frac{z^2}{c^2}=1$ (green), plane $\frac{xX}{a^2}+\frac{yY}{b^2}+\frac{zZ}{c^2}=1$ (blue), and ellipsoid $\frac{x(x-X)}{a^2}+\frac{y(y-Y)}{b^2}+\frac{z(z-Z)}{c^2}=0$ (red).
\end{figure}

\end{document}